\newcommand{\CC}{\mathbb{C}}
\newcommand{\RR}{\mathbb{R}}
\newcommand{\QQ}{\mathbb{Q}}
\newcommand{\ZZ}{\mathbb{Z}}
\newcommand{\KK}{\mathbf{k}}
\newcommand{\LL}{\mathbf{K}}
\newcommand{\bfield}{\beta_{\mathrm{field}}}
\newcommand{\gfield}{\gamma_{\mathrm{field}}}
\newcommand{\bfx}{\mathbf{x}}
\newcommand{\bfa}{\mathbf{a}}
\newcommand{\bfe}{\mathbf{e}}
\newcommand{\Supp}{\operatorname{Supp^\prime}}
\newtheorem{theorem}{Theorem}[section]
\newtheorem{lemma}[theorem]{Lemma}
\newtheorem{corollary}[theorem]{Corollary}
\newtheorem{proposition}[theorem]{Proposition}
\newtheorem*{theorem*}{Theorem}
\newtheorem*{proposition*}{Proposition}
\newtheorem*{conjecture*}{Conjecture}
\theoremstyle{definition}
\newtheorem{definition}[theorem]{Definition}
\newtheorem{example}[theorem]{Example}
\newtheorem*{remark}{Remark}
\newtheorem{notation}[theorem]{Notation}
\newtheorem{convention}[theorem]{Convention}
\newcommand{\subjclass}[2][1991]{
  \let\@oldtitle\@title
  \gdef\@title{\@oldtitle\footnotetext{#1 \emph{Mathematics Subject Classification.} #2}}
}
\newcommand{\keywords}[1]{%
  \let\@@oldtitle\@title%
  \gdef\@title{\@@oldtitle\footnotetext{\emph{Key words and phrases.} #1.}}%
}
\title{Degree bounds for rational generators of invariant fields of finite abelian groups}
\author{Ben Blum-Smith}
\date{\today}
\subjclass[2020]{Primary 13A50 Secondary 20M25, 52C05, 52C07}
\keywords{Invariants, rational invariants, Noether number, degree bound, field generators, lattices}
\begin{document}

\maketitle

\begin{abstract}
    We study degree bounds on rational but not necessarily polynomial generators for the field $\mathbf{k}(V)^G$ of rational invariants of a linear action of a finite abelian group. We show that lattice-theoretic methods used recently by the author and collaborators to study polynomial generators for the same field largely carry over, after minor modifications to the arguments. It then develops that the specific degree bounds found in that setting also carry over.
\end{abstract}

\section{Introduction}

Degree bounds for generators for rings of invariants are a topic of longstanding interest in invariant theory \cite{noether, fleischmann2000noether, fogarty2001noether, fleischmann2006noethermodular, symonds2011castelnuovo, schmid1991finite, domokos-hegedus, sezer2002sharpening, cziszter-domokos, derksen2017polynomial, gandini2019ideals, ferraro2021noether}. Much less studied are degree bounds for generators for fields of rational invariants. Very general upper bounds were proven in \cite{hubert-labahn} and \cite{fleischmann2007homomorphisms}. The author and collaborators took up this question systematically in \cite{blum2023degree}, considering the quantities
\[
\bfield(G,V) := \min(d:\KK(V)^G\text{ is generated by invariant polynomials of degree}\leq d)
\]
and
\[
\gfield(G,V) = \min(d: \KK(V)^G\text{ has a transcendence basis of polynomials of degree} \leq d),
\]
where $V$ is a representation of a group $G$ over a field $\KK$, and $\KK(V)^G$ is, as usual, the field of invariant rational functions. Results of that inquiry are discussed below.

The focus on {\em polynomials} generating the field of invariant rational functions was motivated by an application to signal processing: when $\KK=\RR$, the numbers $\bfield$ and $\gfield$ exert control over the number of samples needed to accurately estimate a signal in $V$ that is corrupted both by gaussian noise and by transformations selected randomly from $G$ (\cite[Theorems~2.15 and 2.16]{bandeira2017estimation}; and see  the introduction of \cite{blum2023degree}). Still, because $\KK(V)^G$ is a field, it is natural to ask what happens when we do not insist the field generators or transcendence basis are polynomials.

Therefore, in this note we consider versions of $\bfield$ and $\gfield$, called $\bfield^\mathbf{r}$ and $\gfield^\mathbf{r}$, which allow the generators of $\KK(V)^G$ (respectively, of a subfield of full transcendence degree) to be rational functions. This note is intended as a companion to \cite{blum2023degree}; while we make an effort to give self-contained statements of the results, the proofs inevitably refer to related arguments in \cite{blum2023degree}.

The results of \cite{blum2023degree} include sharp lower bounds depending on $V$ \cite[Theorems~3.1 and 3.2; Corollary~3.6]{blum2023degree}, and an upper bound in the case that $G=\ZZ/p\ZZ$ and $\operatorname{char}\KK\neq p$ \cite[Theorem~3.11]{blum2023degree}, provided $V$ has enough nontrivial isotypic components. This upper bound is not sharp; a sharp upper bound was conjectured \cite[Conjecture~5.1]{blum2023degree}. It is essentially immediate that the upper bound proven in \cite{blum2023degree} for $\bfield$ also applies to $\bfield^\mathbf{r}$; the main results of the present work are that, at least for abelian $G$ and non-modular $V$, the same lower bounds also apply. We also show that the upper bound conjectured in \cite{blum2023degree} for $\bfield$, is attained by  $\bfield^\mathbf{r}$ and even $\gfield^\mathbf{r}$. These results are given in Section~\ref{sec:bounds}. In Section~\ref{sec:field-gens-and-lattices}, we verify that the same lattice-theoretic methods used in \cite{blum2023degree} to study $\bfield,\gfield$ can be used to study $\bfield^\mathbf{r}$ and $\gfield^\mathbf{r}$. We also show that over any field bereft of nontrivial roots of unity (such as $\RR$), $\bfield$ and $\bfield^\mathbf{r}$ are  equal. The biggest difference with the theory developed in \cite{blum2023degree} is that $\bfield^\mathbf{r},\gfield^\mathbf{r}$ can be lowered by base-change. Notation and conventions are set up in Section~\ref{sec:notation}, where we also mention a useful (well-known) lemma.

\section{Notation and preliminaries}\label{sec:notation}

We import the following notation from \cite{blum2023degree}:
\begin{notation}
    Throughout, $G$ is a finite group, $\KK$ is a field, usually of characteristic coprime to $G$ (i.e., the {\em nonmodular case}), $V$ is a finite-dimensional representation of $G$ over $\KK$, and $N = \dim V$. The symbol $p$ always represents a prime number, specifically in the context that $G=\ZZ/p\ZZ$; likewise, $n$ is always a (not-necessarily prime) natural number such that $G=\ZZ/n\ZZ$. In the special case that $G$ is abelian and $\KK$ has coprime characteristic, $m$ represents the number of nontrivial isotypic components in $V$'s base change to $\KK$'s algebraic closure. (If $\KK$ already contains $|G|$th roots of unity, then $m$ is just the number of nontrivial isotypic components in $V$.)
\end{notation}

Before beginning the inquiry, one first has to decide on a definition of the degree of a rational function $f=f_1/f_2$ that generalizes the degree of polynomials. Reasonable choices include:
\begin{enumerate}
    \item $\max(\deg f_1,\deg f_2)$, so that, in the univariate case with $\KK=\CC$, the notion coincides with the topological degree of the self-map of the Riemann sphere induced by $f$.
    \item $\deg f_1 - \deg f_2$, so that, restricting to homogeneous rational functions, the degree captures the weight of the scaling action of $\KK^\times$ on $f$, i.e., 
    \[
    f(\alpha\bfx)=\alpha^{\deg f} f(\bfx)
    \]
    for $\alpha \in \KK^\times$.
    \item $\deg f_1 + \deg f_2$, so that $\deg f_1f_2 = \deg f_1f_2^{-1}$ and the notion provides a rough measure of the complexity of writing $f$ down.
\end{enumerate}

In this note we opt for the last of these choices. Thus, henceforward, we make the following definitions. 

\begin{definition}\label{def:degree-rational}
    For $f\in \KK(V)$,  the {\em degree} of $f$ is
\[
    \deg f := \deg f_1 + \deg f_2,
\]
where $f=f_1/f_2$ with $f_1,f_2\in \KK[V]$ and $\gcd(f_1,f_2) = 1$. Then $\KK(V)^G_{\leq d}$ is the set of invariant rational functions of degree $\leq d$. (Note that this is not a $\KK$-linear space, as it is not closed under addition.)
\end{definition}

\begin{definition}
With this in place, we set
\[
\bfield^\mathbf{r}(G,V):= \min(d:\KK(V)^G\text{ is generated as a field by }\KK(V)^G_{\leq d})
\]
and
\[
\gfield^\mathbf{r}(G,V):=\min(d:\KK(V)^G_{\leq d}\text{ contains a transcendence basis for }\KK(V)^G\text{ over }\KK),
\]
in analogy to $\bfield$ and $\gfield$ above.
\end{definition}

In the situation that $G$ is abelian, the characteristic of $\KK$ is coprime with $|G|$, and $\KK$ contains $|G|$th roots of unity, the action of $G$ on $V$ can be diagonalized. Working in a diagonal basis $x_1,\dots,x_N$, the action of $G$ on $V$ is monomial, i.e., every monomial is an eigenvector of the action, and the invariant monomials form a monoid under multiplication. Following a standard method in the invariant theory of finite abelian groups, we can then identify the invariant ring $\KK[V]^G$ with the monoid algebra, which is a normal affine semigroup ring. 

\begin{definition}\label{def:L(G,V)}
As in \cite{blum2023degree}, for abelian $G$ and $\KK$ containing distinct $|G|$th roots of unity, we write $L(G,V)$ for the ambient group of the monoid of invariant monomials, identified in the natural way with a sublattice of $\ZZ^N$. Explicitly, suppose that $G$ acts diagonally on a basis $x_1,\dots,x_N$ for $V^*$. Then if $\bfa\in \ZZ^N$, then $\bfx^\bfa$ is the Laurent monomial in $x_1,\dots,x_N$ with exponent vector $\bfa$, and 
\[
L(G,V) := \{\bfa \in \ZZ^N : g\bfx^\bfa = \bfx^\bfa\text{ for all }g\in G\}.
\]
\end{definition}

\begin{notation}
    With $\bfa\in\ZZ^N$ and $\bfx^\bfa\in \KK(V)$ as in Definition~\ref{def:L(G,V)}, we write
    \[
    \operatorname{exp}(\bfa):= \bfx^\bfa
    \]
    and view $\operatorname{exp}$ as a map from $\ZZ^N$ to $\KK(V)$. This notation is inspired by \cite[Chapter~6]{bruns-herzog}.
\end{notation}

\begin{definition}
It will transpire below, much as in \cite{blum2023degree}, that in the situation of Definition~\ref{def:L(G,V)}, $\gfield^\mathbf{r}(G,V)$ and $\bfield^\mathbf{r}(G,V)$ are not affected by deletion of duplicate or trivial characters in $V$. Following \cite{blum2023degree}, we write
\[
L(G,\Supp V) := L(G,V'),
\]
where $V'$ is a representation of $G$ obtained from $V$ by deleting all trivial and duplicate characters (so that each isotypic component of $V'$ is one-dimensional, and none is trivial). Note that $\dim V'=m$.
\end{definition}

\begin{convention}
In the situation of Definition~\ref{def:L(G,V)}, so that $L(G,V)$ and $L(G,\Supp V)$ are defined, we apply the word {\em degree} to points of these lattices so as to accord with Definition~\ref{def:degree-rational}. Explicitly, if $\bfa\in \ZZ^N$ or $\ZZ^m$, we set $\deg \bfa :=\deg \bfx^\bfa$. This is nothing but the $L^1$ norm of $\bfa$.  
\end{convention}

\begin{convention}\label{conv:chars-are-ints}
    In the situation of Definition~\ref{def:L(G,V)}, suppose, furthermore, that $G = \ZZ/n\ZZ$ with $n$ a natural number. Then we adopt the convention that an integer $k\in \ZZ$ represents the character
    \[
    \ell \mapsto e^{2\pi k\ell / n}\in \CC^\times
    \]
    of $G$, where $\ell\in G=\ZZ/n\ZZ$ (noting that the exponential on the right side is well-defined on residue classes mod $n$). With this convention, if $S$ is a set of integers representing the distinct nontrivial characters in $V^*$, it will follow from the work below that $\bfield^\mathbf{r}$ and $\gfield^\mathbf{r}$ only depend on the set $S$. Thus we write
    \[
    \gfield^\mathbf{r}(G,S) := \gfield^\mathbf{r}(G,V)\text{ and }\bfield^\mathbf{r}(G,S) := \bfield^\mathbf{r}(G,V).
    \]
    Furthermore, $L(G,\Supp V)$ is defined by the linear equation mod $n$ with coefficients from $S$. Therefore, given an arbitrary set of integers $S=\{s_1,\dots,s_m\}$, we write
    \[
    L(G,S) := \{\bfa\in \ZZ^m : s_1a_1 + \dots + s_ma_m = 0\pmod{n}\}.
    \]
\end{convention}

These conventions having been set up, we mention the following well-known fact that will be used repeatedly:

\begin{lemma}\label{lem:semi-invariants}
    If $f=f_1/f_2\in \KK(V)^G$ is an invariant rational function, with $f_1,f_2\in \KK[V]$ coprime, then $f_1,f_2$ are semi-invariants of $G$ of the same weight; in other words, there exists a  character
    \[
    \varepsilon: G\rightarrow \KK^\times
    \]
    such that $gf_i=\varepsilon(g)f_i$ for $i=1,2$ and all $g\in G$. 
\end{lemma}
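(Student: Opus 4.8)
The plan is to exploit the uniqueness of the coprime factorization $f = f_1/f_2$ together with the fact that $G$ acts on $\KK[V]$ by $\KK$-algebra automorphisms that respect degree. Fix $g \in G$. Applying $g$ to the identity $f_2 f = f_1$ gives $(g f_2)(g f) = g f_1$, and since $f$ is $G$-invariant this reads $(g f_2) f = g f_1$, i.e. $g f_1 / g f_2 = f_1 / f_2$ as elements of $\KK(V)$. So $(g f_1) f_2 = (g f_2) f_1$ in $\KK[V]$.

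The next step is to extract the semi-invariance from this equality using unique factorization in the polynomial ring $\KK[V]$. Since $g$ is an automorphism of $\KK[V]$, it permutes associate classes of irreducibles and preserves the property $\gcd(g f_1, g f_2) = 1$. From $(g f_1) f_2 = (g f_2) f_1$ and coprimality of $f_1, f_2$, we get that $f_1 \mid g f_1$; comparing degrees (which $g$ preserves, as it acts linearly on $V^*$) forces $g f_1 = \varepsilon(g) f_1$ for some nonzero scalar $\varepsilon(g) \in \KK^\times$, and then substituting back yields $g f_2 = \varepsilon(g) f_2$ with the \emph{same} scalar. The only remaining point is that $g \mapsto \varepsilon(g)$ is a group homomorphism $G \to \KK^\times$: this follows immediately from $(gh) f_1 = g(h f_1) = g(\varepsilon(h) f_1) = \varepsilon(h)\,\varepsilon(g) f_1$, using that $\varepsilon(h) \in \KK$ is fixed by the $\KK$-linear map $g$.

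There is a minor edge case: if $f_1$ (or $f_2$) is a nonzero constant, the argument degenerates but still goes through, with $\varepsilon$ trivial on the constant factor; and if $f = 0$ the statement is vacuous or one takes $f_1 = 0$, $f_2 = 1$, which one may simply exclude. I expect no real obstacle here — the only thing to be careful about is making sure the cancellation/coprimality bookkeeping in $\KK[V]$ is stated cleanly (invoking that $\KK[V]$ is a UFD and $g$ an automorphism preserving degree), so that the scalars on $f_1$ and $f_2$ are forced to coincide rather than merely to exist separately. This is a short, standard argument; the brief proof will essentially be the two displayed computations above.
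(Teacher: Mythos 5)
Your argument is correct and is essentially identical to the paper's proof: both derive $(gf_1)f_2=(gf_2)f_1$ from invariance, use coprimality in the UFD $\KK[V]$ plus degree-preservation of $g$ to force $gf_i=\varepsilon(g)f_i$ with a common scalar, and then verify multiplicativity of $\varepsilon$ (the paper phrases this as a $1$-cocycle computation, which reduces to your calculation since $G$ acts trivially on $\KK^\times$). No gaps.
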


\begin{proof}
    This is standard (see, e.g.,  \cite[Theorem~3.3(b)]{popov-vinberg} and \cite[Lemma~3.2]{hubert-labahn}), but the elementary proof is short enough to include:

    For any $g\in G$, we have $f_1/f_2=gf_1/gf_2$ by the invariance of $f$, or in other words,
    \[
f_1(gf_2) = f_2(gf_1).
    \]
    Since $f_1$ and $f_2$ are coprime and $\KK[V]$ is a unique factorization domain, it must be that $f_1$ divides $gf_1$. Then  $f_2$ divides $gf_2$ with the same ratio. Since $g$ is degree-preserving, the common ratio is in $\KK^\times$. We define
    \[
    \varepsilon(g) := \frac{gf_1}{f_1} = \frac{gf_2}{f_2}.
    \]
    It is immediate that $\varepsilon: G\rightarrow \KK^\times$ is a $1$-cocycle:
    \[
    \varepsilon(gh) = \frac{ghf_1}{f_1} = \frac{gf_1}{f_1}\cdot \frac{ghf_1}{gf_1} = \frac{gf_1}{f_1}\cdot g\left(\frac{hf_1}{f_1}\right) = \varepsilon(g)\cdot g\varepsilon(h).
    \]
    Since the $G$-action on $\KK^\times$ is trivial, it is actually a group homomorphism.
\end{proof}

\section{Rational field generators and lattices}\label{sec:field-gens-and-lattices}

We now give a preliminary exploration of $\bfield^\mathbf{r}$ and $\gfield^\mathbf{r}$ by adapting the methods of \cite{blum2023degree}. A first observation is that all the upper bounds on $\bfield,\gfield$ proven in \cite{blum2023degree} hold in the present setting: the basic inequalities
\begin{align*}
\bfield^\mathbf{r}(G,V) &\leq \bfield(G,V),\\ \gfield^\mathbf{r}(G,V) &\leq \gfield(G,V)
\end{align*}
are immediate from the definitions, because $\KK[V]^G_{\leq d} \subset \KK(V)^G_{\leq d}$. It is also evident that
\[
\gfield^\mathbf{r}(G,V) \leq \bfield^\mathbf{r}(G,V),
\]
so lower bounds on $\gfield^\mathbf{r}$ apply to $\bfield^\mathbf{r}$, and upper bounds on $\bfield^\mathbf{r}$ apply to $\gfield^\mathbf{r}$. In particular, by \cite[Theorem~3.11]{blum2023degree}, if $G=\ZZ/p\ZZ$,  $\KK$ has characteristic coprime with $p$, and $V$ contains at least three nontrivial characters of $G$ after base-changing to the algebraic closure of $\KK$, then we have
\[
\gfield^\mathbf{r}(G,V) \leq \bfield^\mathbf{r}(G,V), \gfield(G,V) \leq \bfield(G,V) \leq \frac{p+3}{2}.
\]
If \cite[Conjecture~5.1]{blum2023degree} is correct, the right side can be lowered to $\lceil p / \lceil m/2\rceil\rceil$. This latter is sharp for $\gfield$ \cite[Proposition~5.2]{blum2023degree}, and we will see below (Proposition~\ref{prop:conjecture-is-sharp}) that it is even sharp for $\gfield^\mathbf{r}$ .

Next, in the situation that $\KK=\RR$, admitting rational field generators does not allow one to reduce the degrees of the functions needed to generate the field:

\begin{proposition}\label{prop:rational-no-better-over-R}
    Let $G$ be a finite group and $V$ a finite-dimensional representation of $G$ over a field $\KK$ whose only roots of unity are $\pm 1$. Then
    \[
    \bfield^\mathbf{r}(G,V) = \bfield(G,V).
    \]
\end{proposition}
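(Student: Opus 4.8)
The plan is to prove the nontrivial inequality $\bfield(G,V) \leq \bfield^\mathbf{r}(G,V)$; the reverse inequality has already been noted to be immediate from $\KK[V]^G_{\leq d}\subseteq\KK(V)^G_{\leq d}$. The engine is Lemma~\ref{lem:semi-invariants} together with the observation that, since $G$ is finite, every character $\varepsilon\colon G\to\KK^\times$ takes values among the roots of unity of $\KK$, which by hypothesis are only $\pm 1$; hence $\varepsilon^2$ is the trivial character for every such $\varepsilon$.

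Concretely, I would fix $d:=\bfield^\mathbf{r}(G,V)$ and show that the subfield of $\KK(V)^G$ generated by $\KK(V)^G_{\leq d}$ coincides with the subfield generated by $\KK[V]^G_{\leq d}$; since the former is all of $\KK(V)^G$ by the definition of $d$, this yields $\bfield(G,V)\leq d$. It suffices to check that an arbitrary $f\in\KK(V)^G_{\leq d}$ lies in the field generated by $\KK[V]^G_{\leq d}$. We may assume $f\neq 0$, as otherwise $f$ lies in any subfield. Write $f=f_1/f_2$ with $f_1,f_2\in\KK[V]$ coprime and $\deg f_1+\deg f_2\leq d$; since $f\neq 0$, both $f_1$ and $f_2$ are nonzero. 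By Lemma~\ref{lem:semi-invariants}, $f_1$ and $f_2$ are semi-invariants of a common weight $\varepsilon\colon G\to\KK^\times$, and by the observation above $\varepsilon^2=1$.

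Now choose a nonzero semi-invariant $h$ of weight $\varepsilon$ of \emph{minimal} degree $\delta$; such an $h$ exists because $f_1$ is one candidate, and since $f_1$ and $f_2$ are both nonzero semi-invariants of weight $\varepsilon$ we have $\delta\leq\deg f_1$ and $\delta\leq\deg f_2$. Then $f_1h$ and $f_2h$ are genuine invariant polynomials, their weight being $\varepsilon^2=1$, and
\[
\deg(f_1h)=\deg f_1+\delta\leq\deg f_1+\deg f_2\leq d,\qquad \deg(f_2h)=\deg f_2+\delta\leq\deg f_1+\deg f_2\leq d.
\]
Since $f=f_1/f_2=(f_1h)/(f_2h)$ exhibits $f$ as a ratio of two elements of $\KK[V]^G_{\leq d}$, it lies in the field generated by $\KK[V]^G_{\leq d}$, completing the argument. (When $\varepsilon$ is already trivial one may simply take $h=1$ and $\delta=0$, recovering $f_1,f_2\in\KK[V]^G_{\leq d}$ directly.)

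I do not anticipate a serious obstacle here; the one point requiring care is that the auxiliary semi-invariant $h$ be chosen of small enough degree, and the extremal choice $\delta\leq\min(\deg f_1,\deg f_2)$ is exactly what makes both estimates $\deg(f_ih)\leq\deg f_1+\deg f_2$ go through. It is also worth remarking that this argument uses neither that $G$ is abelian nor that the characteristic of $\KK$ is coprime to $|G|$: the proposition holds for an arbitrary finite group $G$ and an arbitrary finite-dimensional representation over such a field $\KK$.
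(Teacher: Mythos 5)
Your proof is correct and takes essentially the same approach as the paper: both hinge on Lemma~\ref{lem:semi-invariants} plus the observation that $\varepsilon^2=1$, and on the fact that the additive degree convention makes the products of two weight-$\varepsilon$ semi-invariants fit within degree $d$. Your auxiliary minimal-degree semi-invariant $h$ is a mild cosmetic variant of the paper's device (which in effect takes $h$ to be whichever of $f_1,f_2$ has smaller degree, yielding the invariants $f_1^2$, $f_1f_2$, $f_2^2$), and your closing remark that neither abelianness nor coprime characteristic is needed matches the paper's level of generality.
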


The main idea in the argument is that the paucity of roots of unity forces any rational invariant to be representable in terms of polynomial invariants of equally low degree.

\begin{proof}
    It was noted above that $\bfield^\mathbf{r}(G,V) \leq\bfield(G,V)$. We need the reverse inequality.

    Let $f=f_1/f_2$ be any invariant rational function, with $f_1,f_2\in \KK[V]$ and $\gcd(f_1,f_2)=1$. By Lemma~\ref{lem:semi-invariants}, there exists a group homomorphism $\varepsilon:G\rightarrow \KK^\times$ with $gf_i = \varepsilon(g)f_i$ for $i=1,2$ and all $g\in G$.
    Since $G$ is finite (so that $\varepsilon(g)$ must be a root of unity for all $g\in G$), and we have assumed that the only roots of unity in $\KK$ are $\pm 1$, we have $\epsilon(g)^2=1$ for all $g$. It follows that in addition to $f = f_1/f_2$, all of 
    \[
    f_1^2,\; f_1f_2,\; f_2^2
    \]
    are invariant. Furthermore, according to our definition of $\deg f$, it is equal to $\deg f_1f_2$ and also to the mean of $\deg f_1^2,\deg f_2^2$; thus at least one of $f_1^2,f_2^2$ has  degree at most that of $f$.

    It follows that for a rational invariant $f$ of degree $d$, either $f_1^2/(f_1f_2)$ or $f_1f_2/f_2^2$ yields a representation of $f$ as a ratio of invariant polynomials of degree $\leq d$. So for any given degree $d$, and in particular, for $d=\bfield^\mathbf{r}(G,V)$, the invariant rational functions of degree $\leq d$ are already in the field generated by the invariant polynomials of degree $\leq d$. We can conclude that 
    \[
    \bfield(G,V) \leq  \bfield^\mathbf{r}(G,V),
    \]
    completing the proof.
\end{proof}

If $\KK$ is a field richer in roots of unity than $\RR$, the conclusion of Proposition~\ref{prop:rational-no-better-over-R} may fail; we can have $\bfield^\mathbf{r}$ properly lower than $\bfield$.

\begin{example}\label{ex:base-change-example}
    Let $\KK=\CC$ and let $G = \ZZ/7\ZZ$. Let $V$ be the $3$-dimensional representation given by the characters $1$, $2$, and $4$ (as in Convention~\ref{conv:chars-are-ints}). Then $L(G,V)$ contains the point $(1,1,1)$ but no other nontrivial, non-negative points of degree $\leq 3$. On the other hand, it contains $(1,3,0)$ and $(0,1,3)$ which, together with $(1,1,1)$, generate it. Thus, $L(G,V)$ is generated by points of degree $4$ but the points of lower degree do not generate a full-rank sublattice. By the results of \cite[Section~2]{blum2023degree}, it follows that $\gfield(G,V) = \bfield(G,V) = 4$. 
    
    Meanwhile, if we take $x_1,x_2,x_4$ as the diagonal basis for $V^*$ corresponding to the characters $1,2,4$ respectively, then $x_1^2/x_2$ and $x_2^2/x_4$ are rational invariants of degree $3$, corresponding to the lattice points $(2,-1,0)$ and $(0,2,-1)$. Together with $x_1x_2x_4$, these generate the field of rational invariants. Thus $\bfield^\mathbf{r}(G,V)\leq 3$. (Using the tools below, this can be sharpened to equality.)
\end{example}

To continue the inquiry, we need analogues to the results of \cite[Section~2]{blum2023degree}, relating $\bfield$ and $\gfield$ to $L(G,V)$, and using this to establish that they only depend on the set of nontrivial characters in $V^*$ when $G$ is abelian and $\KK$ has coprime characteristic and sufficiently many roots of unity. The analogue of \cite[Lemma~2.1]{blum2023degree} (allowing arbitrary base changes without affecting $\bfield$ or $\gfield$) actually fails, but if we assume that the ground field contains all the relevant roots of unity, the rest of \cite[Section~2]{blum2023degree} goes through. Furthermore, base change can still only lower $\bfield^\mathbf{r}$ and $\gfield^\mathbf{r}$, so we will be able to get a version of the lower bounds of \cite{blum2023degree} in this setting, without this restriction on the ground field.

We illustrate the failure of the base-change lemma for $\bfield^\mathbf{r}(G,V)$ with an example.

\begin{example}\label{ex:base-change-fails}
    Let $\KK:=\QQ(\sqrt{-7})$. This is the subfield of the cyclotomic field $\LL:=\QQ(\zeta_7)$ corresponding to the subgroup $H$ of order three in $\operatorname{Gal}(\LL/\QQ) \cong (\ZZ/7\ZZ)^\times$. The Galois group $\operatorname{Gal}(\LL/\QQ)$ acts on the character group of $G=\ZZ/7\ZZ$ by post-composition, and $\{1,2,4\}$ is an orbit for the action of $H$. Thus the representation of $G=\ZZ/7\ZZ$ in Example~\ref{ex:base-change-example} is in fact defined over $\KK=\LL^H$; let $V$ be the corresponding representation over $\KK$, and let $V_\LL$ be its base-change to $\LL$. Because $\LL$ contains $7$th roots of unity, the diagonal basis $x_1,x_2,x_4$ discussed in Example~\ref{ex:base-change-example} is defined over $\LL$; thus the argument of Example~\ref{ex:base-change-example} shows that $\bfield^\mathbf{r}(G,V_\LL) \leq 3$ (and again, this can be sharpened to equality using the tools below). 
    
    On the other hand, we claim that $\bfield^\mathbf{r}(G,V) = 4$.  Because $\KK$'s only roots of unity are $\pm 1$, we have $\bfield^\mathbf{r}(G,V)=\bfield(G,V)$ by  Proposition~\ref{prop:rational-no-better-over-R}; then, we have $\bfield(G,V)=4$ by Example~\ref{ex:base-change-example} combined with \cite[Lemma~2.1]{blum2023degree}. This establishes the claim. So $\bfield^\mathbf{r}(G,V_\LL)<\bfield^\mathbf{r}(G,V)$ strictly here.
\end{example}

Although the base-change lemma fails, its trivial half continues to hold. The argument is essentially equivalent to the corresponding half of \cite[Lemma~2.1]{blum2023degree}, and comes down to the fact that $\KK(V)^G_{\leq d} \subset \LL(V_\LL)^G_{\leq d}$.

\begin{lemma}\label{lem:half-base-change}
    If $G$ is a finite group, $V$ is a finite-dimensional representation of $G$ over a field $\KK$, and $\LL$ is an arbitrary extension of $\KK$, and $V_\LL := \LL\otimes_\KK V$, then
    \[
    \bfield^\mathbf{r}(G,V_\LL) \leq \bfield^\mathbf{r}(G,V)
    \]
    and
    \[
    \gfield^\mathbf{r}(G,V_\LL) \leq \gfield^\mathbf{r}(G,V).
    \]
\end{lemma}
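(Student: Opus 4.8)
The plan is to choose a $\KK$-basis $x_1,\dots,x_N$ of $V^*$ — which base-changes to an $\LL$-basis of $V_\LL^*$ — so as to obtain compatible inclusions $\KK[V]=\KK[x_1,\dots,x_N]\subseteq\LL[x_1,\dots,x_N]=\LL[V_\LL]$ and $\KK(V)\subseteq\LL(V_\LL)$ that respect the $G$-action and the degrees of polynomials. Against this backdrop I would establish two facts and then combine them formally: (i) $\KK(V)^G_{\leq d}\subseteq\LL(V_\LL)^G_{\leq d}$ for every $d$; and (ii) inside $\LL(V_\LL)$, the invariant field $\LL(V_\LL)^G$ equals the compositum $\LL\cdot\KK(V)^G$.

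Fact (i) is the easy half, and is exactly where the hint "$\KK(V)^G_{\leq d}\subset\LL(V_\LL)^G_{\leq d}$" comes in: if $f=f_1/f_2\in\KK(V)^G_{\leq d}$ with $f_1,f_2\in\KK[V]$ coprime, then $f$ is still $G$-invariant when viewed in $\LL(V_\LL)$ (the action is by the same matrices), and re-reducing $f_1/f_2$ to lowest terms in the UFD $\LL[V_\LL]$ can only cancel common factors, so $\deg_\LL f\leq\deg_\KK f\leq d$.

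For fact (ii) — which I expect to be the only real content, though it is a standard Galois-descent argument — I would set $M:=\LL\cdot\KK(V)^G\subseteq\LL(V_\LL)^G$ and note first that $M\cdot\KK(V)=\LL(V_\LL)$, since the $x_i$ lie in $\KK(V)$ and generate $\LL(V_\LL)$ over $\LL$. By Artin's theorem $\KK(V)/\KK(V)^G$ is finite Galois with group the image $\bar G$ of $G$ in $\Aut_\KK\KK(V)$, and likewise $\LL(V_\LL)/\LL(V_\LL)^G$ is Galois with the image of $G$ in $\Aut_\LL\LL(V_\LL)$ as its group — an image canonically identified with $\bar G$, because an element of $G$ acts trivially on $\LL(V_\LL)$ precisely when it fixes every $x_i$, i.e. precisely when it acts trivially on $\KK(V)$. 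The standard compositum degree bound then gives $[\LL(V_\LL):M]=[M\cdot\KK(V):M]\leq[\KK(V):\KK(V)^G]=|\bar G|$, while $M\subseteq\LL(V_\LL)^G$ forces $[\LL(V_\LL):M]\geq[\LL(V_\LL):\LL(V_\LL)^G]=|\bar G|$; comparing, $[\LL(V_\LL):M]=|\bar G|$ and so $M=\LL(V_\LL)^G$.

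To finish, take $d=\bfield^\mathbf{r}(G,V)$: the field $\KK(V)^G$ is generated over $\KK$ (a fortiori over $\LL$) by $\KK(V)^G_{\leq d}$, so by (ii) $\LL(V_\LL)^G=\LL\cdot\KK(V)^G$ is generated over $\LL$ by $\KK(V)^G_{\leq d}$, which by (i) lies in $\LL(V_\LL)^G_{\leq d}$; hence $\bfield^\mathbf{r}(G,V_\LL)\leq d$. Similarly, take $d=\gfield^\mathbf{r}(G,V)$ and a transcendence basis $t_1,\dots,t_r\in\KK(V)^G_{\leq d}$ of $\KK(V)^G$ over $\KK$; since $\KK(V)^G$ is a finite extension of $\KK(t_1,\dots,t_r)$, fact (ii) shows $\LL(V_\LL)^G=\LL\cdot\KK(V)^G$ is algebraic over $\LL(t_1,\dots,t_r)$, and because algebraically independent elements remain so over any field extension, the $t_i$ form a transcendence basis of $\LL(V_\LL)^G$ over $\LL$ lying in $\LL(V_\LL)^G_{\leq d}$ by (i); hence $\gfield^\mathbf{r}(G,V_\LL)\leq d$.
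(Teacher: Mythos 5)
Your proposal is correct and follows essentially the same route as the paper: both arguments rest on the inclusion $\KK(V)^G_{\leq d}\subseteq \LL(V_\LL)^G_{\leq d}$ together with the fact that $\LL(V_\LL)^G$ is generated by $\LL$ and $\KK(V)^G$, after which the two conclusions are formal. The only difference is in how that generation statement is justified --- the paper deduces it from $\LL[V_\LL]^G\cong\LL\otimes_\KK\KK[V]^G$ and passage to fraction fields, while you use Artin's theorem and a compositum degree count --- and either justification is sound.
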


\begin{proof}
    Let $d= \bfield^\mathbf{r}(G,V)$. Then $\KK(V)^G_{\leq d}$ generates $\KK(V)^G$ as a field. Since $\KK(V)^G_{\leq d} \subset \LL(V_\LL)^G_{\leq d}$, it follows that the field generated by $\LL(V_\LL)^G_{\leq d}$ contains both $\LL$ and $\KK[V]^G\subset \KK(V)^G$. It thus contains $\LL[V_\LL]^G \cong \LL\otimes_\KK \KK[V]^G$, and therefore its fraction field $\LL(V_\LL)^G$. Thus, $\bfield^\mathbf{r}(G,V_\LL)\leq d$.

    Similarly, if $d=\gfield^\mathbf{r}(G,V)$, then $\KK(V)^G_{\leq d}$ contains a transcendence basis for $\KK(V)$ over $\KK$. The elements of this basis are also contained in $\LL(V_\LL)^G_{\leq d} \supset \KK(V)^G_{\leq d}$. Just as in the proof of \cite[Lemma~2.1]{blum2023degree}, they remain algebraically independent over $\LL$. Thus, by counting, they constitute a transcendence basis of $\LL(V_\LL)$ over $\LL$, so $\gfield^\mathbf{r}(G,V_\LL)\leq d$.
\end{proof}

Going forward, we assume that $G$ is abelian, that $\operatorname{char}\KK \nmid |G|$, and that $\KK$ contains $|G|$th roots of unity, so that the action of $G$ on $V$ is diagonalizable over $\KK$. We  work in a diagonal basis $x_1,\dots,x_N$ for $V^*$; the lattice $L(G,V)$ is then defined. By Lemma~\ref{lem:half-base-change}, lower bounds on $\bfield^\mathbf{r},\gfield^\mathbf{r}$ obtained in this setting will apply over arbitrary non-modular $\KK$. As mentioned above, under these hypotheses, all the remaining results in \cite[Section~2]{blum2023degree} go through, as we now verify.
\begin{notation}
    For positive integer $N$, let $\Omega_N$ be the standard cross-polytope in $\RR^N$, the convex hull of the standard basis vectors $\bfe_1,\dots,\bfe_N$ and their negatives. It is alternatively characterized as the closed unit ball in the $L^1$ norm on $\RR^N$.\footnote{A more standard name for this polytope is $\beta_N$; however, we opt for $\Omega_N$ to avoid overworking the symbol $\beta$ any more than necessary.}
\end{notation}
This polytope takes the place of the $\Delta_N$ of \cite[Section~2]{blum2023degree}, which is the convex hull of $\bfe_1,\dots,\bfe_N$ and the origin. For any $d>0$, the points of $\ZZ^N$ contained in $d\Omega_N$ are the exponent vectors of the Laurent monomials of degree $\leq d$.

We have an analogue of \cite[Lemma~2.4]{blum2023degree}:

\begin{lemma}\label{lem:pre-equivalence-rational}
    Let $d$ be a positive integer.
    \begin{enumerate}
        \item The points of $L(G,V)$ contained in $d\Omega_N$ generate $L(G,V)$ if and only if $\KK(V)^G_{\leq d}$ generates $\KK(V)^G$ as a field.
        \item The points of $L(G,V)$ contained in $d\Omega_N$ generate a full-rank sublattice of $L(G,V)$ if and only if $\KK(V)^G$ is a finite field extension of the field generated by $\KK(V)^G_{\leq d}$. Furthermore, when these equivalent conditions hold, the degree of the field extension equals the index of the lattice containment, i.e., 
        \[
        [\KK(V)^G:\KK(\KK(V)^G_{\leq d})] = [L(G,V): \langle L(G,V)\cap d\Omega_N\rangle].
        \]
    \end{enumerate}
\end{lemma}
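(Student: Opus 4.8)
The plan is to follow \cite[Section~2]{blum2023degree}: first establish a lattice-theoretic dictionary for $\KK(V)^G_{\leq d}$ (the analogue of \cite[Lemma~2.4]{blum2023degree}), then deduce both parts from standard facts about group algebras of lattices. Write $\Lambda_d := \langle L(G,V)\cap d\Omega_N\rangle \subseteq L(G,V)$ and $F_d := \KK\bigl(\operatorname{exp}(\bfa):\bfa\in\Lambda_d\bigr)$. The crucial claim is that
\[
\KK\bigl(\KK(V)^G_{\leq d}\bigr) = F_d .
\]
The inclusion $\supseteq$ is immediate: for $\bfa\in L(G,V)\cap d\Omega_N$ the Laurent monomial $\operatorname{exp}(\bfa)=\bfx^\bfa$ is $G$-invariant of degree $\|\bfa\|_1\leq d$, hence lies in $\KK(V)^G_{\leq d}$; since $\KK(\KK(V)^G_{\leq d})$ is a field containing $\KK$, it contains $\operatorname{exp}(\bfc)$ for every $\bfc$ in the group $\Lambda_d$. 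For $\subseteq$ it suffices to check that $\KK(V)^G_{\leq d}\subseteq F_d$.

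So let $f=f_1/f_2\in\KK(V)^G_{\leq d}$ with $f_1,f_2\in\KK[V]$ coprime. By Lemma~\ref{lem:semi-invariants}, $f_1$ and $f_2$ are semi-invariants of a common weight $\varepsilon$; since the $G$-action is diagonal, every monomial occurring in $f_1$ or in $f_2$ has weight $\varepsilon$, so any two such monomials differ by a vector of $L(G,V)$. The delicate point is the degree bookkeeping: if one normalizes $f$ by dividing numerator and denominator by a single monomial $\bfx^\bfb$ occurring in $f_1$, then every monomial of $f_2/\bfx^\bfb$ has $L^1$-norm at most $\deg f_1+\deg f_2\leq d$, whereas a monomial of $f_1/\bfx^\bfb$ is only controlled by $2\deg f_1$; dividing instead by a monomial occurring in $f_2$ swaps the roles. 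The remedy is to choose adaptively --- divide through by a monomial of $f_1$ when $\deg f_1\leq\deg f_2$, and by a monomial of $f_2$ otherwise. In either case $2\min(\deg f_1,\deg f_2)\leq\deg f_1+\deg f_2\leq d$, so after normalization both the numerator and the denominator become $\KK$-linear combinations of Laurent monomials $\operatorname{exp}(\bfc)$ with $\bfc\in L(G,V)\cap d\Omega_N$, hence elements of the field $F_d$; the nonzero denominator is then invertible in $F_d$, and $f\in F_d$. (Coprimality of $f_1,f_2$ enters only through Lemma~\ref{lem:semi-invariants}.) I expect this adaptive normalization to be the only genuinely non-obvious point; the remaining steps transcribe \cite[Section~2]{blum2023degree} essentially verbatim.

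Granting the claim, both parts follow formally. One uses: (i) $\KK(V)^G=\KK\bigl(\operatorname{exp}(\bfa):\bfa\in L(G,V)\bigr)$, from the standard identification $\KK(V)^G=\Frac\KK[V]^G$, the fact that $\KK[V]^G$ is spanned by invariant monomials, and the fact that $L(G,V)$ is generated as a group by its non-negative vectors; (ii) $L(G,V)$ has full rank $N$ in $\ZZ^N$ (it contains $|G|\ZZ^N$), so $\operatorname{trdeg}_\KK\KK(V)^G=N$; and (iii) for lattices $\Lambda\subseteq\Lambda'$ with $[\Lambda':\Lambda]$ finite, $\KK(\operatorname{exp}(\Lambda))=\Frac\KK[\Lambda]$, the ring $\KK[\Lambda']$ is a free $\KK[\Lambda]$-module of rank $[\Lambda':\Lambda]$ on a set of coset representatives, these representatives are also a $\Frac\KK[\Lambda]$-basis of $\Frac\KK[\Lambda']$, and hence $[\KK(\operatorname{exp}(\Lambda')):\KK(\operatorname{exp}(\Lambda))]=[\Lambda':\Lambda]$. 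Applying (iii) with $\Lambda'=L(G,V)$ and $\Lambda=\Lambda_d$, together with the claim: if $\Lambda_d$ has full rank then $\KK(V)^G$ is a finite extension of $\KK(\KK(V)^G_{\leq d})=F_d$ of degree $[L(G,V):\Lambda_d]$, which equals $1$ exactly when $\Lambda_d=L(G,V)$; and if $\Lambda_d$ does not have full rank then $F_d$ has transcendence degree $<N$, so $\KK(V)^G$ is not even algebraic over it. This delivers the first assertion, the second assertion, and the index formula simultaneously; the main obstacle, as noted above, is the degree accounting in the reverse inclusion.
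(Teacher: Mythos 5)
Your proposal is correct, and its overall architecture coincides with the paper's: both reduce the lemma to showing that the field generated by $\KK(V)^G_{\leq d}$ equals the field generated by the invariant Laurent monomials of degree $\leq d$ (equivalently by $\operatorname{exp}(\Lambda_d)$), and then transcribe the lattice-to-field dictionary of \cite[Lemma~2.4]{blum2023degree} (freeness of $\KK[\Lambda']$ over $\KK[\Lambda]$ on coset representatives, full rank versus transcendence degree, index equals extension degree). The one place where you genuinely diverge is the proof of that key reduction, which the paper isolates as Lemma~\ref{lem:rationally-expressible}. The paper forms \emph{all} cross-ratios $m_i/n_j$ between a monomial of the numerator and a monomial of the denominator --- each automatically of degree $\leq \deg f_1+\deg f_2\leq d$, with no case analysis --- and recovers $f$ via the identity $f=\sum_i\bigl(\sum_j(m_i/n_j)^{-1}\bigr)^{-1}$. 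You instead divide numerator and denominator by a single monomial $\bfx^\bfb$, chosen adaptively from whichever of $f_1,f_2$ has the smaller degree so that the worst ratio is bounded by $2\min(\deg f_1,\deg f_2)\leq d$. Both arguments are valid and yield the same conclusion; the paper's version buys uniformity (no adaptive choice, and it exhibits $f$ directly as a rational expression in degree-$\leq d$ invariant Laurent monomials), while yours buys a more transparent normalization at the cost of the $\min$ bookkeeping you correctly flag as the delicate point. Your closing deduction of both parts and of the index formula from the group-algebra facts is sound and matches what the paper imports from \cite[Lemma~2.4]{blum2023degree}.
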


\begin{proof}
    We need one significant idea not already found in the proof of \cite[Lemma~2.4]{blum2023degree}:
    
    \begin{lemma}\label{lem:rationally-expressible}
        Every element of $\KK(V)^G_{\leq d}$ is rationally expressible in terms of the Laurent monomials contained in $\KK(V)^G_{\leq d}$.
    \end{lemma} 
    
    \begin{proof}
    Let $f = f_1/f_2\in \KK(V)^G_{\leq d}$ be a rational invariant of degree $\leq d$, with $f_1,f_2$ coprime. By Lemma~\ref{lem:semi-invariants}, $f_1,f_2$ are semi-invariants of a common weight $\varepsilon:G\rightarrow \KK^\times$. Let $f_1 = \sum_i m_i$ be the decomposition of $f_i$ into distinct monomials $m_i$ (in the diagonal basis $x_1,\dots,x_N$) and let $f_2 = \sum_j n_j$ be the corresponding decomposition of $f_2$. We have 
    \[
    \deg m_i \leq \deg f_1,\; \deg n_j \leq \deg f_2
    \]
    for any $i,j$. Because $G$ acts diagonally, each $m_i$ is a semi-invariant for the action of $G$; linear independence of the $m_i$ over $\KK$ then implies that all of them have weight $\varepsilon$; by the same argument, all the $n_j$ are semi-invariants of weight $\varepsilon$. It follows that for any $i,j$, the Laurent monomial $m_in_j^{-1}$ is invariant. Furthermore, we have
    \[
    \deg m_in_j^{-1} = \deg m_i + \deg n_j \leq \deg f_1 + \deg f_2 \leq d.
    \]
    Putting this together, we have $m_in_j^{-1}\in \KK(V)^G_{\leq d}$ for any $i,j$. Now  we reach the desired conclusion by applying the following elementary calculation, which represents a ratio of sums as a rational function of the ratios of the individual terms:
    \[
    f = \frac{f_1}{f_2} = \frac{\sum_i m_i}{\sum_j n_j} = \sum_i\left(\sum_j\left(\frac{m_i}{n_j}\right)^{-1}\right)^{-1}.\qedhere
    \]
    \end{proof}
    
    With this in place, the proof of \cite[Lemma~2.4]{blum2023degree} goes through {\em mutatis mutandis}, as follows. After replacing $\Delta_N$ and $\KK[V]^G_{\leq d}$ everywhere with $\Omega_N$ and $\KK(V)^G_{\leq d}$ respectively, the proof transfers to the present context word-for-word except for two points which require to invoke the above:
    \begin{enumerate}
        \item In contrast to $\KK[V]^G_{\leq d}$, it is not the case that $\KK(V)^G_{\leq d}$ is the linear span of the Laurent monomials it contains. But by Lemma~\ref{lem:rationally-expressible}, it is contained in the field generated by the Laurent monomials it contains. In particular, it contains a transcendence basis for $\KK(V)^G$ over $\KK$ if and only if the set of Laurent monomials inside it contains a transcendence basis. So the same argument as in \cite[Lemma~2.4]{blum2023degree} shows that $\KK(V)^G$ is finite over the subfield generated by $\KK(V)^G_{\leq d}$ if and only if the lattice generated by $d\Omega_N \cap L(G,V)$ is full-rank.

        \item The proof of \cite[Lemma~2.4]{blum2023degree} features the following sequence of containments, where $L'$ is the group $\exp(\langle L(G,V)\cap d\Delta_N\rangle)$ of Laurent monomials generated by the invariant (true) monomials of degree $\leq d$:
        \[
        \KK[\KK[V]^G_{\leq d}] \subset \KK[L'] \subset \KK(\KK[V]^G_{\leq d}).
        \]
        The first containment becomes false when we replace $\KK[V]^G_{\leq d}$ with $\KK(V)^G_{\leq d}$ and redefine $L'$ as \[
        \exp(\langle L(G,V)\cap d\Omega_N\rangle),
        \]
        the group of Laurent monomials generated by the invariant {\em Laurent} monomials of degree $\leq d$. However, the only purpose for this containment in the proof of \cite[Lemma~2.4]{blum2023degree} was to argue that $\KK(L') = \KK(\KK[V]^G_{\leq d})$, and it is true that
        \[
        \KK(L') = \KK(\KK(V)^G_{\leq d})
        \]
        with the new definition of $L'$. This follows from Lemma~\ref{lem:rationally-expressible} because $\exp(L(G,V) \cap d\Omega_N)$ is exactly the set of Laurent monomials contained in $\KK(V)^G_{\leq d}$, so Lemma~\ref{lem:rationally-expressible} implies equality of the first and last terms in the following evident sequence of containments:
        \[
        \KK(\exp(L(G,V) \cap d\Omega_N)) \subset \KK(\exp(\langle L(G,V)\cap d\Omega_N\rangle))=\KK(L') \subset \KK(\KK(V)^G_{\leq d}).
        \]
        Having established $\KK(L') = \KK(\KK(V)^G_{\leq d}))$ in this setting, the rest of the proof is unaffected.\qedhere
    \end{enumerate}
\end{proof}

The analogue of \cite[Lemma~2.6]{blum2023degree} is now immediate from Lemma~\ref{lem:pre-equivalence-rational} and the definitions of $\gfield^\mathbf{r}$ and $\bfield^\mathbf{r}$:

\begin{lemma}\label{lem:equivalence-rational}
    We have
    \[
    \bfield^\mathbf{r}(G,V) = \min(d:L(G,V) = \langle L(G,V)\cap d\Omega_N\rangle)
    \]
    and
    \[
    \pushQED{\qed} 
    \gfield^\mathbf{r}(G,V) = \min(d:\operatorname{rk}\langle L(G,V)\cap d\Omega_N\rangle = N). \qedhere
    \popQED
    \]
\end{lemma}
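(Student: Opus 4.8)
The plan is to obtain both identities as formal consequences of Lemma~\ref{lem:pre-equivalence-rational} together with the definitions of $\bfield^\mathbf{r}$ and $\gfield^\mathbf{r}$; there is essentially no new content, only a translation of quantifiers. I would present the two cases in turn.

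For $\bfield^\mathbf{r}$: by definition, $\bfield^\mathbf{r}(G,V)$ is the least $d$ for which $\KK(V)^G_{\leq d}$ generates $\KK(V)^G$ as a field. By Lemma~\ref{lem:pre-equivalence-rational}(1), for each fixed positive integer $d$ this occurs precisely when the points of $L(G,V)$ in $d\Omega_N$ generate $L(G,V)$, i.e.\ when $L(G,V) = \langle L(G,V)\cap d\Omega_N\rangle$. Taking the least such $d$ on each side gives the first identity (one should note in passing that both sides are finite, e.g.\ because $\KK[V]^G$ is finitely generated).

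For $\gfield^\mathbf{r}$: I would first record the elementary fact that any generating set of a field extension contains a transcendence basis of that extension (an immediate application of the exchange property for algebraic independence). Consequently, $\KK(V)^G_{\leq d}$ contains a transcendence basis for $\KK(V)^G$ over $\KK$ if and only if the subfield it generates, $\KK(\KK(V)^G_{\leq d})$, has the same transcendence degree over $\KK$ as $\KK(V)^G$; and since $\KK(V)^G$ is finitely generated over $\KK$, this holds if and only if $\KK(V)^G$ is a finite extension of $\KK(\KK(V)^G_{\leq d})$. By Lemma~\ref{lem:pre-equivalence-rational}(2), the latter is in turn equivalent to $\langle L(G,V)\cap d\Omega_N\rangle$ being a full-rank sublattice of $L(G,V)$. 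Finally, because $G$ is finite and acts diagonally on $x_1,\dots,x_N$, we have $|G|\,\ZZ^N\subseteq L(G,V)$, so $L(G,V)$ has rank $N$; hence ``full-rank sublattice of $L(G,V)$'' means precisely ``rank equal to $N$''. Minimizing over $d$ yields the second identity.

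The only step requiring any care, and it is minor, is the equivalence between ``$\KK(V)^G_{\leq d}$ contains a transcendence basis'' and ``$\KK(V)^G$ is finite over $\KK(\KK(V)^G_{\leq d})$'': one must remember that $\KK(V)^G_{\leq d}$ is not a subfield (nor even a $\KK$-subspace), so the argument runs through the subfield it generates, as above. I expect no genuine obstacle here; the substantive work has already been done in Lemma~\ref{lem:pre-equivalence-rational} (and in particular in Lemma~\ref{lem:rationally-expressible}).
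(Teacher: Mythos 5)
Your proposal is correct and follows exactly the route the paper intends: the paper states this lemma as immediate from Lemma~\ref{lem:pre-equivalence-rational} and the definitions, and your write-up simply makes explicit the quantifier translation and the standard equivalence between ``contains a transcendence basis'' and ``the generated subfield has finite index,'' which is where the paper's ``immediate'' is doing its (small amount of) work. No gaps.
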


We also have analogues of \cite[Lemma~2.9]{blum2023degree}, \cite[Lemma~2.11]{blum2023degree}, and therefore \cite[Lemma~2.12]{blum2023degree}.

\begin{lemma}\label{lem:delete-triv-rational}
    Let $V'$ be a representation of $G$ obtained from $V$ by deleting a trivial character. Then $\bfield^\mathbf{r}(G,V) = \bfield^\mathbf{r}(G,V')$ and $\gfield^\mathbf{r}(G,V) = \gfield^\mathbf{r}(G,V')$.
\end{lemma}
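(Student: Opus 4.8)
The plan is to translate the statement into the lattice dictionary supplied by Lemma~\ref{lem:equivalence-rational} and observe that deleting a trivial character merely splits off a free rank-one summand spanned by a degree-one lattice point --- this is the proof of \cite[Lemma~2.9]{blum2023degree} with $\Omega$ in place of $\Delta$. Without loss of generality the deleted character is the one attached to the last basis vector $x_N$, so $\dim V' = N-1$. Because that character is trivial, the relation $g\bfx^\bfa = \bfx^\bfa$ imposes no condition on $a_N$, whence
\[
L(G,V) = L(G,V')\oplus \ZZ\bfe_N \subset \ZZ^{N-1}\oplus\ZZ = \ZZ^N ;
\]
in particular $\bfe_N \in L(G,V)$, and $\deg\bfe_N = 1$.

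Next I would compare the sublattices generated by the low-degree points. Fix an integer $d \geq 1$. I claim
\[
\langle L(G,V)\cap d\Omega_N\rangle = \langle L(G,V')\cap d\Omega_{N-1}\rangle \oplus \ZZ\bfe_N .
\]
For ``$\supseteq$'': if $\bfb \in L(G,V')\cap d\Omega_{N-1}$, then $(\bfb,0) \in L(G,V)\cap d\Omega_N$ since $\deg(\bfb,0) = \deg\bfb \leq d$; and $\bfe_N \in L(G,V)\cap d\Omega_N$ since $\deg\bfe_N = 1 \leq d$. For ``$\subseteq$'': every generating point $(\bfb,c) \in L(G,V)\cap d\Omega_N$ satisfies $\bfb \in L(G,V')$ and $\deg\bfb = \|\bfb\|_1 \leq \|(\bfb,c)\|_1 \leq d$, hence $\bfb \in L(G,V')\cap d\Omega_{N-1}$, and $(\bfb,c) = (\bfb,0) + c\bfe_N$ lies in the right-hand side. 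The only features used are that the coordinate projection $\ZZ^N \to \ZZ^{N-1}$ never increases degree and that $\bfe_N$ is always available as a generator.

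Finally I would feed this into Lemma~\ref{lem:equivalence-rational}. Since both direct sums are taken with respect to the same splitting $\ZZ^N = \ZZ^{N-1}\oplus\ZZ$ and $\langle L(G,V')\cap d\Omega_{N-1}\rangle \subseteq L(G,V')$ always, cancelling the common summand $\ZZ\bfe_N$ shows that $L(G,V) = \langle L(G,V)\cap d\Omega_N\rangle$ holds if and only if $L(G,V') = \langle L(G,V')\cap d\Omega_{N-1}\rangle$; taking the least such $d$ gives $\bfield^\mathbf{r}(G,V) = \bfield^\mathbf{r}(G,V')$. Likewise $\operatorname{rk}\langle L(G,V)\cap d\Omega_N\rangle = \operatorname{rk}\langle L(G,V')\cap d\Omega_{N-1}\rangle + 1$ while $N = (N-1) + 1$, so the former equals $N$ exactly when the latter equals $N-1$; taking the least such $d$ gives $\gfield^\mathbf{r}(G,V) = \gfield^\mathbf{r}(G,V')$. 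The entire content lies in the direct-sum identity of the middle step, so I do not anticipate a serious obstacle; the one point requiring attention is the degenerate range ($\dim V' = 0$, or the minima collapsing when $V$ is a sum of trivial characters), where one checks directly that $d = 1$ already suffices and the claimed equalities persist under the standing non-degeneracy conventions.
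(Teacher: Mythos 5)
Your proposal is correct and follows essentially the same route as the paper: the paper's proof simply invokes the argument of \cite[Lemma~2.9]{blum2023degree}, whose content is exactly the splitting $L(G,V)=L(G,V')\oplus\ZZ\bfe_N$ together with the observation that the inclusion and projection maps respect the cross-polytopes $d\Omega_{N-1}$ and $d\Omega_N$, fed into Lemma~\ref{lem:equivalence-rational}. You have merely written out explicitly the direct-sum identity for $\langle L(G,V)\cap d\Omega_N\rangle$ that the paper leaves to the cited reference.
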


\begin{lemma}\label{lem:merge-identical-rational}
    Let $V'$ be a representation of $G$ obtained from $V$ by merging a pair of identical characters. Then $\bfield^\mathbf{r}(G,V)=\bfield^\mathbf{r}(G,V')$ and $\gfield^\mathbf{r}(G,V)=\gfield^\mathbf{r}(G,V')$.
\end{lemma}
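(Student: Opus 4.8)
The plan is to transport the statement to the lattices $L(G,V)\subset\ZZ^N$ and $L(G,V')\subset\ZZ^{N-1}$ via Lemma~\ref{lem:equivalence-rational}, in parallel with \cite[Lemma~2.11]{blum2023degree}. After relabelling the diagonal basis we may assume the merged pair is the one attached to $x_1$ and $x_2$, so that these two characters coincide; call their common value $\chi$, and give $V'$ the characters $\chi,\chi_3,\dots,\chi_N$. Introduce the ``merge'' homomorphism $\pi\colon\ZZ^N\to\ZZ^{N-1}$, $\pi(a_1,a_2,a_3,\dots,a_N)=(a_1+a_2,a_3,\dots,a_N)$. Since $G$ acts diagonally and $x_1,x_2$ carry the same character, the weight of $\bfx^\bfa$ depends only on $\pi(\bfa)$, so $\bfa\in L(G,V)\iff\pi(\bfa)\in L(G,V')$, i.e.\ $L(G,V)=\pi^{-1}(L(G,V'))$. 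I would then record three elementary facts: (i) $\pi$ restricts to a surjection $L(G,V)\twoheadrightarrow L(G,V')$ with kernel $\ZZ(\bfe_1-\bfe_2)$, and $\bfe_1-\bfe_2\in L(G,V)$ has degree $2$; (ii) $\deg\pi(\bfa)\le\deg\bfa$ for every $\bfa$, by the $L^1$ triangle inequality; (iii) the linear section $\sigma\colon(b_1,b_3,\dots,b_N)\mapsto(b_1,0,b_3,\dots,b_N)$ of $\pi$ is degree-preserving and maps $L(G,V')$ into $L(G,V)$.

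\textbf{Key step.} The heart of the matter is to show that for every integer $d\ge2$,
\[
L(G,V)=\langle L(G,V)\cap d\Omega_N\rangle\iff L(G,V')=\langle L(G,V')\cap d\Omega_{N-1}\rangle,
\]
and likewise that $\operatorname{rk}\langle L(G,V)\cap d\Omega_N\rangle=N$ if and only if $\operatorname{rk}\langle L(G,V')\cap d\Omega_{N-1}\rangle=N-1$. For the forward implications, push a generating (resp.\ rank-$N$) subset of $L(G,V)\cap d\Omega_N$ through $\pi$: by (ii) the images lie in $L(G,V')\cap d\Omega_{N-1}$; they generate $L(G,V')$ by surjectivity; and, as $\ker\pi$ has rank $1$, the image of a rank-$N$ sublattice has rank $N-1$. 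For the reverse implications, given $\bfa\in L(G,V)$ write $\bfa=\sigma(\pi(\bfa))+t(\bfe_1-\bfe_2)$ for a suitable $t\in\ZZ$ (the difference lies in $\ker\pi$); lifting a $\ZZ$-decomposition of $\pi(\bfa)$ over $L(G,V')\cap d\Omega_{N-1}$ through the linear degree-preserving section $\sigma$ writes $\sigma(\pi(\bfa))$ in terms of $L(G,V)\cap d\Omega_N$, and the correction $t(\bfe_1-\bfe_2)$ already lies there since $d\ge2$. For the rank statement, the $\sigma$-lifts of $N-1$ independent points of $L(G,V')\cap d\Omega_{N-1}$ stay independent and lie in the hyperplane $\{a_2=0\}$, so adjoining $\bfe_1-\bfe_2$ (which does not lie there) yields a rank-$N$ subset of $L(G,V)\cap d\Omega_N$.

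\textbf{Conclusion and main obstacle.} The leftover case $d=1$ is a harmless degeneracy: $\gfield^\mathbf{r}(G,V')=1$ (or $\bfield^\mathbf{r}(G,V')=1$) forces $L(G,V')=\ZZ^{N-1}$, whence every character of $V'$, and so of $V$, is trivial and $L(G,V)=\ZZ^N$, so both invariants are $1$. With the equivalence established for all positive $d$, Lemma~\ref{lem:equivalence-rational} yields $\bfield^\mathbf{r}(G,V)=\bfield^\mathbf{r}(G,V')$ and $\gfield^\mathbf{r}(G,V)=\gfield^\mathbf{r}(G,V')$. I do not anticipate a genuine obstacle: the whole content is the bookkeeping around $\pi$, and the only delicate point is the kernel direction $\bfe_1-\bfe_2$, whose degree $2$ is precisely what forces the innocuous hypothesis $d\ge2$. (Alternatively one could transcribe the proof of \cite[Lemma~2.11]{blum2023degree} verbatim, replacing $\Delta_N$ by $\Omega_N$ and $\KK[V]^G$ by $\KK(V)^G$ and invoking Lemma~\ref{lem:rationally-expressible} exactly as in the proof of Lemma~\ref{lem:pre-equivalence-rational}.)
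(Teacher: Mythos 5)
Your proposal is correct and follows essentially the same route as the paper, which simply transcribes the proof of \cite[Lemma~2.11]{blum2023degree} into the lattice framework of Lemma~\ref{lem:equivalence-rational}, using the merge projection $\pi$ and the degree-preserving section $I$ (your $\sigma$) together with the observation that these maps interact with $d\Omega_N$ exactly as they do with $d\Delta_N$. Your explicit treatment of the kernel direction $\bfe_1-\bfe_2$ (degree $2$, hence in $d\Omega_N$ for $d\geq 2$) and of the degenerate case $d=1$ is sound and, if anything, spells out details the paper leaves to the cited argument.
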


\begin{proof}[Proof of Lemmas~\ref{lem:delete-triv-rational} and \ref{lem:merge-identical-rational}]
    The proofs of \cite[Lemma~2.9]{blum2023degree} and \cite[Lemma~2.11]{blum2023degree} go through essentially word-for-word in this setting after changing $\Delta_N$ to $\Omega_N$ and replacing the calls to \cite[Lemma~2.6]{blum2023degree} with calls to Lemma~\ref{lem:equivalence-rational}. The key observation is that the embedding $I$ of those proofs injects $d\Omega_{N-1}$ into $d\Omega_N$ (just as it injects $d\Delta_{N-1}$ into $d\Delta_N$), and likewise, the maps $\pi$ and $\pi'$ (of \cite[Lemma~2.9]{blum2023degree} and \cite[Lemma~2.11]{blum2023degree} respectively) map $d\Omega_N$ onto $d\Omega_{N-1}$ (just as they map $d\Delta_N$ onto $d\Delta_{N-1}$).
\end{proof}

\begin{lemma}\label{lem:distinct-nontrivial-rational}
    The numbers $\bfield^\mathbf{r}(G,V)$ and $\gfield^\mathbf{r}(G,V)$ depend only on the set of distinct, nontrivial characters of $G$ that occur in $V$ (and not on their multiplicities).
\end{lemma}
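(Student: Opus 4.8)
The plan is to obtain this immediately by iterating the two reduction steps furnished by Lemmas~\ref{lem:delete-triv-rational} and \ref{lem:merge-identical-rational}. The first thing to observe is that, under the standing hypotheses, $\bfield^\mathbf{r}(G,V)$ and $\gfield^\mathbf{r}(G,V)$ depend only on the multiset of characters of $G$ appearing in $V$: two equivariantly isomorphic representations yield, in their respective diagonal bases, the same lattice $L(G,V)\subset\ZZ^N$ up to a permutation of coordinates, and such a permutation preserves both $\Omega_N$ and the $L^1$-degree, so Lemma~\ref{lem:equivalence-rational} gives the claim. (Alternatively, isomorphism-invariance is clear directly from the definitions.)

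Next, given $V$, I would apply Lemma~\ref{lem:delete-triv-rational} repeatedly to delete all trivial characters; since $\dim V$ strictly decreases at each step, this process terminates. Then I would apply Lemma~\ref{lem:merge-identical-rational} repeatedly to merge pairs of identical nontrivial characters; again $\dim V$ drops each time, so this too terminates. The outcome is a representation $V'$ — the one denoted $\Supp V$ in the discussion before Convention~\ref{conv:chars-are-ints}, with $L(G,V')=L(G,\Supp V)$ — each of whose isotypic components is one-dimensional and nontrivial, and which is determined up to isomorphism by nothing more than the set of distinct nontrivial characters of $G$ that occur in $V$. By Lemmas~\ref{lem:delete-triv-rational} and \ref{lem:merge-identical-rational}, $\bfield^\mathbf{r}(G,V)=\bfield^\mathbf{r}(G,V')$ and $\gfield^\mathbf{r}(G,V)=\gfield^\mathbf{r}(G,V')$.

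To conclude, let $W$ be any representation of $G$ with the same set of distinct nontrivial characters as $V$, and let $W'$ be produced from $W$ by the same procedure. Then $W'\cong V'$ (both are ``one copy of each distinct nontrivial character''), so by the isomorphism-invariance recorded above — or directly, since at the level of lattices both sides equal $L(G,\Supp V)=L(G,\Supp W)$ — one gets $\bfield^\mathbf{r}(G,V)=\bfield^\mathbf{r}(G,V')=\bfield^\mathbf{r}(G,W')=\bfield^\mathbf{r}(G,W)$, and likewise $\gfield^\mathbf{r}(G,V)=\gfield^\mathbf{r}(G,W)$.

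I do not expect any real obstacle here: the content lies entirely in Lemmas~\ref{lem:delete-triv-rational} and \ref{lem:merge-identical-rational}, and this is the exact analogue of the passage from \cite[Lemmas~2.9 and 2.11]{blum2023degree} to \cite[Lemma~2.12]{blum2023degree}. The only points meriting a moment's care are that the two chains of reductions terminate — they do, as $\dim V$ is a strictly decreasing nonnegative integer along each — and that the terminal representation is independent, up to isomorphism, of the order in which the moves are carried out, which is automatic from its characterization as carrying exactly one copy of each distinct nontrivial character of $G$ in $V$.
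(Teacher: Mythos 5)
Your proposal is correct and matches the paper's proof, which likewise deduces the lemma by induction from Lemmas~\ref{lem:delete-triv-rational} and \ref{lem:merge-identical-rational}; you have merely spelled out the termination and isomorphism-invariance details that the paper leaves implicit. No issues.
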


\begin{proof}
    Immediate by induction from Lemmas~\ref{lem:delete-triv-rational} and \ref{lem:merge-identical-rational}.
\end{proof}

\section{Bounds}\label{sec:bounds}

This groundwork having been laid, we can reproduce all of the main results of \cite{blum2023degree} for abelian groups in the present setting: 

\begin{theorem}[Sharp lower bound]\label{thm:lower-bound-rational}
If $G$ is a finite abelian group, and $V$ is a faithful, non-modular, finite-dimensional representation of $G$, and $m$ is the number of distinct, nontrivial characters occurring in $V$, then
\[
\gfield^\mathrm{r}(G,V)\geq \sqrt[m]{|G|}.
\]
\end{theorem}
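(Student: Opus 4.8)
The plan is to reduce to the \textit{split and reduced} case and then finish with a covolume/Hadamard estimate, mirroring the argument behind \cite[Theorems~3.1 and 3.2]{blum2023degree}. By Lemma~\ref{lem:half-base-change}, extension of scalars can only lower $\gfield^\mathbf{r}$, so it suffices to prove the bound after base-changing $V$ to a field $\LL\supseteq\KK$ containing the $|G|$th roots of unity; such a base change preserves faithfulness (the matrices of the action are unchanged) and does not alter the number $m$ of distinct nontrivial characters (every character of $G$ is already $\LL$-valued, so no further splitting occurs over $\bar\KK$). Over such an $\LL$ the action is diagonal, so $L(G,V_\LL)$ is defined, and by Lemma~\ref{lem:distinct-nontrivial-rational} we may delete trivial and duplicate characters to arrive at a faithful representation with exactly $m$ one-dimensional, pairwise distinct, nontrivial isotypic components; faithfulness survives because deleting trivial characters and merging identical ones leaves unchanged the subgroup of $\widehat{G}$ that the occurring characters generate. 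Thus we may assume $N=\dim V=m$, that $V$ is faithful, and that $L:=L(G,V)\subseteq\ZZ^m$ is defined.

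Faithfulness now translates into an index statement: the homomorphism $\ZZ^m\to\widehat{G}$ sending $\bfa$ to $\prod_i\chi_i^{a_i}$ (with $\chi_1,\dots,\chi_m$ the characters of the diagonal basis) has kernel $L$ and image the subgroup generated by the $\chi_i$, which is all of $\widehat{G}$ by faithfulness; hence $[\ZZ^m:L]=|G|$. Set $d:=\gfield^\mathbf{r}(G,V)$. By Lemma~\ref{lem:equivalence-rational} the sublattice $\langle L\cap d\Omega_m\rangle$ has rank $m$, so we may pick $\RR$-linearly independent $u_1,\dots,u_m\in L\cap d\Omega_m$; each lies in $d\Omega_m$, so $\|u_i\|_1\le d$ and a fortiori $\|u_i\|_2\le d$. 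Let $P$ be the integer matrix with rows $u_1,\dots,u_m$. Since $\langle u_1,\dots,u_m\rangle_\ZZ\subseteq L\subseteq\ZZ^m$ is a tower of finite-index subgroups, $|\det P|=[\ZZ^m:\langle u_1,\dots,u_m\rangle_\ZZ]=[\ZZ^m:L]\cdot[L:\langle u_1,\dots,u_m\rangle_\ZZ]$ is a positive integer multiple of $|G|$, so $|G|\le|\det P|$. On the other hand, Hadamard's inequality gives $|\det P|\le\prod_{i=1}^m\|u_i\|_2\le d^m$. Combining the two bounds yields $|G|\le d^m$, i.e.\ $\gfield^\mathbf{r}(G,V)\ge\sqrt[m]{|G|}$.

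I expect the main obstacle to be entirely in the first paragraph's bookkeeping: one must check carefully that the two reductions (base change to a field with enough roots of unity, and deletion of trivial and duplicate characters) disturb neither $m$ nor faithfulness, so that the clean index argument of the second paragraph applies. Once one is in the split, reduced, faithful situation, the inequality is just the Hadamard/covolume estimate and nothing further is required; indeed, this is the same mechanism that yields the lower bounds for $\gfield$ in \cite{blum2023degree}, with the cross-polytope $\Omega_m$ in place of the simplex, the only relevant feature being that its lattice points have $L^1$-norm (equivalently, $L^2$-norm bounded by their $L^1$-norm) at most $d$.
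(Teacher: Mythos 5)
Your proof is correct and follows essentially the same route as the paper: reduce via Lemma~\ref{lem:half-base-change} and Lemma~\ref{lem:distinct-nontrivial-rational} to the split, reduced, faithful case, identify $[\ZZ^m:L(G,V)]=|G|$, and bound the determinant of $m$ independent lattice points of $d\Omega_m$ by $d^m$ --- which is precisely the geometry-of-numbers inequality (valid in all orthants, as the paper emphasizes) that the paper imports by citation from the proof of \cite[Theorem~3.1]{blum2023degree}. The only difference is cosmetic: you spell out the Hadamard/covolume estimate that the paper leaves to the reference, and you perform the base change at the start rather than the end.
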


\begin{proof}
First assume that $\KK$ contains the $|G|$th roots of unity, so we can work in the diagonal basis. The proof of \cite[Theorem~3.1]{blum2023degree} goes through essentially unchanged in this setting (after replacing $\gfield$ with $\gfield^\mathbf{r}$), since none of the inequalities in that proof actually require the hypothesis that the points $\bfa_1,\dots,\bfa_m$ lie in the first orthant, now that we have expanded the definition of $\deg \bfa_i$ to cover all orthants.

If $\KK$ does not contain all the distinct $|G|$th roots of unity, let $\LL$ be a field extension of $\KK$ that does (which exists since by hypothesis, $\operatorname{char}\KK$ does not divide $|G|$). Then $\gfield^\mathbf{r}(G,V)\geq \gfield^\mathbf{r}(G,V_\LL)$ by Lemma~\ref{lem:half-base-change}, and $\gfield^\mathbf{r}(G,V_\LL)\geq \sqrt[m]{|G|}$ by the previous paragraph, so we have the desired conclusion.
\end{proof}

\begin{proposition}[Characterization of groups attaining the lower bound]
Suppose equality is attained in Theorem~\ref{thm:lower-bound-rational}, and let $d:=\gfield^\mathbf{r}(G,V)$. Then $\KK$ contains $d$th roots of unity, $G\cong (\ZZ/d\ZZ)^m$, and the distinct, nontrivial characters in $V$ form a basis for the character group $\widehat G$ of $G$ (as a $\ZZ/d\ZZ$-module).
\end{proposition}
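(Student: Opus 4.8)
The plan is to trace the inequality chain underlying Theorem~\ref{thm:lower-bound-rational}, record exactly what is forced when it is an equality, and then descend from the algebraically convenient case back to $\KK$. First I would reduce to the case where $\KK$ contains the $|G|$th roots of unity: put $\LL:=\KK(\mu_{|G|})$, a finite Galois extension of $\KK$ because $\Char\KK\nmid|G|$. By Lemma~\ref{lem:half-base-change},
\[
d=\gfield^\mathbf{r}(G,V)\ \geq\ \gfield^\mathbf{r}(G,V_\LL)\ \geq\ \sqrt[m]{|G|},
\]
the last step being Theorem~\ref{thm:lower-bound-rational} applied over $\LL$ (note $m$ is defined via base change to the algebraic closure, hence unchanged). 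Since $d=\sqrt[m]{|G|}$ by hypothesis, all these are equalities, so equality is attained over $\LL$ too. I would then work over $\LL$ in a diagonal basis and, by Lemma~\ref{lem:distinct-nontrivial-rational}, replace $V$ with $V'$ of dimension $m$ carrying the distinct nontrivial characters $\chi_1,\dots,\chi_m$. The lattice $L:=L(G,\Supp V)=L(G,V')$ is the kernel of the map $\ZZ^m\to\widehat{G}$, $\bfe_i\mapsto\chi_i$, which is onto by faithfulness of $V$, so $[\ZZ^m:L]=|\widehat{G}|=|G|=d^m$.

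Next I would run the estimate from \cite[Theorem~3.1]{blum2023degree} and extract its equality conditions. By Lemma~\ref{lem:equivalence-rational} there are linearly independent $\bfa_1,\dots,\bfa_m\in L\cap d\Omega_m$; let $M$ be the integer matrix with rows $\bfa_1,\dots,\bfa_m$ and $L':=\langle\bfa_1,\dots,\bfa_m\rangle$. Then
\[
d^m=|G|=[\ZZ^m:L]\ \leq\ [\ZZ^m:L']=|\det M|\ \leq\ \prod_{i=1}^m\|\bfa_i\|_1=\prod_{i=1}^m\deg\bfa_i\ \leq\ d^m,
\]
where the middle inequality is the triangle inequality applied to the Leibniz formula: $|\det M|\leq\sum_{\sigma}\prod_i|a_{i,\sigma(i)}|\leq\sum_{f\colon[m]\to[m]}\prod_i|a_{i,f(i)}|=\prod_i\sum_j|a_{ij}|$. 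Equality throughout yields that $L'=L$ (so $\bfa_1,\dots,\bfa_m$ generate $L$), that $\deg\bfa_i=d$ for every $i$, and that $|\det M|=\prod_i\|\bfa_i\|_1$.

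The heart of the matter is the linear-algebra fact that a nonsingular integer matrix $M$ with $|\det M|=\prod_i\|\mathrm{row}_i\|_1$ is \emph{monomial}, i.e.\ has exactly one nonzero entry in each row and each column. I would prove this from the two inequalities above: equality in $\sum_\sigma\prod_i|a_{i,\sigma(i)}|=\sum_{f}\prod_i|a_{i,f(i)}|$ forces every non-bijective $f\colon[m]\to[m]$ to have some $a_{i,f(i)}=0$; but if some row had two nonzero entries, one could combine them with nonzero entries in each of the remaining rows (which exist since $\det M\neq0$) to build a non-bijective $f$ all of whose entries are nonzero, a contradiction — hence each row, and then by nonsingularity each column, has a unique nonzero entry. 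Combined with $\|\bfa_i\|_1=\deg\bfa_i=d$, this means $\bfa_i=\pm d\,\bfe_{\tau(i)}$ for a permutation $\tau$, so $L=\langle\bfa_1,\dots,\bfa_m\rangle=d\ZZ^m$. Therefore $\widehat{G}\cong\ZZ^m/L=(\ZZ/d\ZZ)^m$, hence $G\cong(\ZZ/d\ZZ)^m$, and the images $\chi_1,\dots,\chi_m$ of $\bfe_1,\dots,\bfe_m$ are the standard basis of $(\ZZ/d\ZZ)^m$, hence a $\ZZ/d\ZZ$-basis of $\widehat{G}$.

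Finally I would descend to $\KK$ for the roots-of-unity claim. Since $V$ has a model over $\KK$, the group $\operatorname{Gal}(\LL/\KK)$ acts semilinearly on $V_\LL$ permuting its isotypic components, so the set of characters occurring in $V_\LL$ — which is exactly $\{\chi_1,\dots,\chi_m\}$ — is $\operatorname{Gal}(\LL/\KK)$-stable; note each $\chi_i$ has order $d$. If $\KK$ did not contain the $d$th roots of unity, then, since $\mu_d\subseteq\LL$, some $\sigma\in\operatorname{Gal}(\LL/\KK)$ acts on $\mu_d$ by $\zeta\mapsto\zeta^{c}$ with $c\in(\ZZ/d\ZZ)^\times$ and $c\not\equiv1\pmod{d}$; then $\sigma$ sends $\chi_1$ to $\chi_1^{c}$, which is nontrivial of order $d$ (as $\gcd(c,d)=1$), differs from $\chi_1$, and — being a power of $\chi_1$ other than $\chi_1$ while $\chi_1,\dots,\chi_m$ are independent — is not among $\chi_1,\dots,\chi_m$, contradicting stability. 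Hence $\KK\supseteq\mu_d$; and then, $G$ having exponent $d$, all characters of $G$ are defined over $\KK$, so the statements $G\cong(\ZZ/d\ZZ)^m$ and ``$\chi_1,\dots,\chi_m$ form a $\ZZ/d\ZZ$-basis of $\widehat{G}$'' hold over $\KK$ verbatim. I expect the equality analysis of the Hadamard-type bound in the third paragraph to be the main obstacle — pinning down precisely when that bound is tight — together with the care needed in this last paragraph to exclude $\KK$ missing roots of unity; the remaining steps are assembled from results already established in the excerpt.
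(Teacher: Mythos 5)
Your proposal is correct and follows essentially the same route as the paper, which simply ports the equality analysis of \cite[Proposition~3.4]{blum2023degree} (noting that at equality $\deg\bfa_i=d$ forces the $\bfa_i$ onto coordinate axes, with a sign flip for any $\bfa_i=-d\bfe_i$). You have merely written out in full the steps the paper outsources — the Hadamard-type equality analysis showing $M$ is monomial, and the Galois-descent argument for the roots-of-unity claim — and these details check out.
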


\begin{proof}
The proof of \cite[Proposition~3.4]{blum2023degree} goes through {\em mutatis mutandis}. This time we get that the $\bfa_i$ lie on coordinate axes (assertion~2 in that proof) from the fact that $|\bfa_i|=\deg \bfa_i$ (which must hold for each $i$ in order for equality to be attained in all the inequalities). In assertion~3, we may get $\bfa_i = -d\bfe_i$ for some of the $i$; if so, replace $\bfa_i$ with $-\bfa_i$. The rest of the argument goes through word-for-word in this context.
\end{proof}

\begin{proposition}[``Hard floor" lower bound]\label{prop:gfield-at-least-3-rational}
    Let $G$ be a finite abelian group, and $V$ a nontrivial, non-modular, finite-dimensional representation of $G$. Then $\gfield^\mathbf{r}=2$ if and only if all the nontrivial characters in $V$ are involutions; otherwise, it is at least $3$.

    In particular, if $m$ is the number of distinct nontrivial characters in $V$ and $\tau$ is the number of involutions in $G$, then the condition
    \[
    m>\tau
    \]
    implies that
    \[
    \gfield^\mathbf{r}(G,V)\geq 3.
    \]
\end{proposition}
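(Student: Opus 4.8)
The plan is to follow the template of the proof of Theorem~\ref{thm:lower-bound-rational}: first assume $\KK$ contains the $|G|$th roots of unity, reduce to $V=\Supp V$ by Lemma~\ref{lem:distinct-nontrivial-rational} (so $N=m$, and $x_1,\dots,x_m$ span eigenlines for distinct nontrivial characters $\chi_1,\dots,\chi_m$), and then read everything off from $L(G,V)\subset\ZZ^m$ using Lemma~\ref{lem:equivalence-rational}. The points of $\ZZ^m$ of degree $\leq 1$ are $0$ and $\pm\bfe_i$, and $\pm\bfe_i\in L(G,V)$ would force $\chi_i$ to be trivial; hence $L(G,V)\cap\Omega_m=\{0\}$ and $\gfield^\mathbf{r}(G,V)\geq 2$ (using $m\geq 1$). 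The points of degree $\leq 2$ are these together with $\pm 2\bfe_i$ and $\pm\bfe_i\pm\bfe_j$ for $i\neq j$, and since the monomial $G$-action preserves degree one checks that $2\bfe_i\in L(G,V)$ iff $\chi_i$ is an involution, $\bfe_i+\bfe_j\in L(G,V)$ iff $\chi_j=\chi_i^{-1}$, while $\bfe_i-\bfe_j\in L(G,V)$ would force $\chi_i=\chi_j$, which distinctness forbids.

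If every $\chi_i$ is an involution, then $2\bfe_1,\dots,2\bfe_m$ lie in $L(G,V)\cap 2\Omega_m$ and span a rank-$m$ sublattice, so $\gfield^\mathbf{r}(G,V)=2$ by Lemma~\ref{lem:equivalence-rational} together with the bound above. Suppose instead some character, say $\chi_1$, is not an involution; I would show the degree-$\leq 2$ lattice points span a sublattice $M$ with $\operatorname{rk} M<m$. The only degree-$\leq 2$ lattice points with nonzero first coordinate are $\pm(\bfe_1+\bfe_j)$ with $\chi_j=\chi_1^{-1}$; by distinctness at most one such $j$ exists, and if it does then $j\neq 1$ and $\chi_j^2=(\chi_1^2)^{-1}\neq 1$, so $\chi_j$ is likewise not an involution and $\chi_1$ is its only possible partner. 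Projecting $\ZZ^m$ onto the coordinates $\{1,j\}$ (or onto $\{1\}$ if no such $j$ exists), every degree-$\leq 2$ lattice point maps into $\{0,\pm(\bfe_1+\bfe_j)\}$ (respectively to $0$), whence $\operatorname{rk} M\leq(m-2)+1<m$ (respectively $\leq m-1$), and $\gfield^\mathbf{r}(G,V)\geq 3$ by Lemma~\ref{lem:equivalence-rational}.

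It remains to drop the roots-of-unity hypothesis and derive the ``in particular'' clause. The ``$\geq 3$'' half descends along base change by Lemma~\ref{lem:half-base-change}: an extension $\LL\supseteq\KK$ containing the $|G|$th roots of unity (which exists in the non-modular case) changes neither $m$ nor which of the $\chi_i$ are involutions, these being computed over $\bar\KK$. For the ``$=2$'' half, observe that if all nontrivial characters of $V_{\bar\KK}$ are involutions then their values lie in $\{\pm 1\}\subset\KK$ (note $\operatorname{char}\KK\neq 2$, as $V$ is nontrivial so $G$ has an element of order $2$), so each $g\in G$ already acts diagonalizably over $\KK$, and since $G$ is abelian $V$ itself is diagonalizable over $\KK$; the first two paragraphs then apply and give $\gfield^\mathbf{r}(G,V)=2$. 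Finally, for finite abelian $G$ the involution characters of $G$ are in bijection with the elements of order $2$ in $G$ (both number $|G[2]|-1$), so $\tau$ also counts the involution characters in $\widehat{G}$; hence $m>\tau$ forces some $\chi_i$ to be a non-involution, and $\gfield^\mathbf{r}(G,V)\geq 3$ by the first part.

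The step I expect to be most delicate is the rank bookkeeping in the second paragraph: correctly enumerating the admissible degree-$\leq 2$ lattice points when a non-involution character is present, and choosing the coordinate projection that exhibits $M$ as rank-deficient. The other point requiring care is that the ``$=2$'' direction does not descend along base change, so it must be obtained separately via the observation that a representation all of whose characters are involutions is already diagonalizable over $\KK$.
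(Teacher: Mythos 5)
Your proposal is correct and follows essentially the same route as the paper: reduce to the case where $\KK$ contains the $|G|$th roots of unity, classify the points of $L(G,\Supp V)$ of degree $\leq 2$ (observing that a mixed-sign degree-$2$ point $\bfe_i-\bfe_j$ would force $\chi_i=\chi_j$, which distinctness forbids), and then run the rank count via Lemma~\ref{lem:equivalence-rational}; the only difference is that you spell out the counting/projection argument that the paper delegates to \cite[Proposition~3.5]{blum2023degree}. Your explicit handling of the ``$=2$'' direction over fields without the relevant roots of unity --- noting that involution characters take values in $\{\pm1\}\subset\KK$ so the representation is already diagonalizable over $\KK$ --- is a worthwhile addition, since that direction is an upper bound and does not descend along base change the way the paper's one-line reduction suggests.
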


\begin{proof}
    As in the proof of Theorem~\ref{thm:lower-bound-rational}, we can assume that $\KK$ contains distinct $|G|$th roots of unity since base-changing to an algebraic closure can only make $\gfield^\mathbf{r}$ smaller, per Lemma~\ref{lem:half-base-change}, and we are proving a lower bound.

    The argument in the proof of \cite[Proposition~3.5]{blum2023degree} goes through {\em mutatis mutandis} after observing that even with the expanded meaning of degree used in this note which applies to points in all orthants, $L(G,\Supp V)$ still contains no points of degree $1$ or $2$ other than those contemplated in the proof of \cite[Proposition~3.5]{blum2023degree} and their opposites. This is evident for degree $1$, and for degree $2$ it is because if a point $\bfa=(\bfa_\chi)_{\chi \in \Supp V}$ is of degree $2$ and has two coordinates of different signs, then it must be of the form $a_{\chi_1}=1$, $a_{\chi_2}=-1$, and $a_\chi= 0$ for $\chi\neq \chi_1,\chi_2$, for characters $\chi_1,\chi_2\in \Supp V$. However, all the characters in $\Supp V$ are distinct (by definition), and in particular, $\chi_1\neq \chi_2$. Thus such a point cannot satisfy the equation $\chi^\bfa = 1\in \widehat G$ that defines $L(G,\Supp V)$. Thus any $\bfa$ of degree $1$ or $2$ in $L(G,\Supp V)$ has all coordinates non-negative or all coordinates non-positive, i.e., they are exactly the points contemplated in the proof of \cite[Proposition~3.5]{blum2023degree} together with their opposites. Since adding in the opposites of a set of first-orthant points has no effect on the lattice they generate, the counting argument in \cite[Proposition~3.5]{blum2023degree} now goes through.
\end{proof}

\begin{corollary}
    If $G$ is finite abelian but not an elementary abelian $2$-group, and $V$ is a faithful non-modular finite-dimensional representation, then $\gfield(G,V)\geq 3$.
\end{corollary}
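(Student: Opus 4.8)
The plan is to derive this directly from Proposition~\ref{prop:gfield-at-least-3-rational} and the elementary inequality $\gfield^\mathbf{r}(G,V) \le \gfield(G,V)$ noted at the start of Section~\ref{sec:field-gens-and-lattices}. The key point to establish is that, under the hypotheses, $V$ must contain a nontrivial character of order greater than $2$ (after base change to an algebraic closure); once we know that, Proposition~\ref{prop:gfield-at-least-3-rational} gives $\gfield^\mathbf{r}(G,V) \ge 3$, and hence $\gfield(G,V) \ge \gfield^\mathbf{r}(G,V) \ge 3$.

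To produce such a character, pass to an algebraic closure $\bar\KK$ of $\KK$ (legitimate because $\Char\KK$ is coprime to $|G|$). Faithfulness is preserved under base change, and $V_{\bar\KK}$ decomposes as a direct sum of one-dimensional representations afforded by characters $\chi_1,\dots,\chi_N \colon G \to \bar\KK^\times$. Faithfulness of $V_{\bar\KK}$ means $\bigcap_i \ker\chi_i = \{e\}$; since for a finite abelian group $G$ the annihilator of the subgroup $\langle \chi_1,\dots,\chi_N\rangle \le \widehat G$ is precisely $\bigcap_i \ker\chi_i$, Pontryagin duality forces $\chi_1,\dots,\chi_N$ to generate $\widehat G$. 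If every nontrivial $\chi_i$ were an involution, then $\widehat G$ would be generated by elements of order dividing $2$, making $\widehat G$ --- and therefore $G \cong \widehat G$ --- an elementary abelian $2$-group, contrary to hypothesis. Hence some $\chi_i$ has order greater than $2$, so $V_{\bar\KK}$ (equivalently, $V$ in the sense of the notation of Section~\ref{sec:notation}) has a nontrivial non-involution character, and Proposition~\ref{prop:gfield-at-least-3-rational} applies.

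This is a short deduction with no real obstacle; the only thing to watch is the reading of ``nontrivial characters in $V$'' when $\KK$ does not contain enough roots of unity --- as elsewhere in the paper, this is to be understood after extending scalars to $\bar\KK$, which is exactly the setting in which the Pontryagin-duality step is run. One also has to note in passing that $G$ failing to be an elementary abelian $2$-group makes it nontrivial, so that a faithful $V$ is a nontrivial representation and Proposition~\ref{prop:gfield-at-least-3-rational} is applicable at all.
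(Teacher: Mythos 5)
Your proof is correct and follows essentially the same route as the paper: the paper's own proof simply reduces to the algebraically closed case and then invokes the argument of \cite[Corollary~3.6]{blum2023degree} with Proposition~\ref{prop:gfield-at-least-3-rational} in place of \cite[Proposition~3.5]{blum2023degree}, and that outsourced argument is precisely your Pontryagin-duality step (faithfulness forces the characters of $V$ to generate $\widehat G$, so not all can be involutions unless $G$ is elementary abelian $2$-torsion). The only difference is that you spell this step out rather than citing it, and you correctly handle the two side conditions (nontriviality of $V$, and reading ``characters in $V$'' after base change) that the paper leaves implicit.
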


\begin{proof}
    Again we can assume $\KK$ contains the $|G|$th roots of unity, as in Theorem~\ref{thm:lower-bound-rational} and Proposition~\ref{prop:gfield-at-least-3-rational}. Then the proof of \cite[Corollary~3.6]{blum2023degree} goes through word-for-word after changing $\gfield$ to $\gfield^\mathbf{r}$, and replacing the call to \cite[Proposition~3.5]{blum2023degree} with a call to Proposition~\ref{prop:gfield-at-least-3-rational}.
\end{proof}

As mentioned above, we also get versions of the upper bounds, with $\bfield^\mathbf{r}$ in the place of $\bfield$, because of the basic inequality $\bfield^\mathbf{r}(G,V)\leq \bfield(G,V)$. Thus, by \cite[Theorem~3.11]{blum2023degree}, for $G=\ZZ/p\ZZ$,  $\KK$ of coprime characteristic, and $V$ such that its base change to the algebraic closure of $\KK$ contains at least 3 distinct nontrivial characters, we have
\[
\bfield^\mathbf{r}(G,V)\leq \frac{p+3}{2},
\]
and by \cite[Proposition~4.4]{blum2023degree}, the same bound holds if there are only two distinct nontrivial characters in $V$'s base change, as long as they are not inverse to each other.

Finally, a version of \cite[Proposition~5.2]{blum2023degree} goes through: 

\begin{proposition}\label{prop:conjecture-is-sharp}
Let $G=\ZZ/n\ZZ$ with $n\geq 3$, and choose any $1 \leq m < n$. If $m$ is even, define
\[
S_m := \{ \pm 1, \pm 2, \dots, \pm m/2\}\subset\widehat G,
\]
where characters of $G$ are represented by integers as in Convention~\ref{conv:chars-are-ints}. If $m$ is odd, define 
\[
S_m := \{ \pm 1, \pm 2, \dots, \pm (m-1)/2, (m+1)/2 \} \subset \widehat G.
\] 
In either case, we have
\[
\bfield^\mathbf{r}(G,S_m) = \gfield^\mathbf{r}(G,S_m) = \max\left(3,\left\lceil \frac{n}{\lceil m/2\rceil} \right\rceil\right).
\]
\end{proposition}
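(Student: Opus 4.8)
Since $S_m$ consists of $m$ integers that are pairwise incongruent mod $n$ (their absolute values are at most $k:=\lceil m/2\rceil\le n/2 < n$), the quantities $\bfield^\mathbf{r}(G,S_m)$ and $\gfield^\mathbf{r}(G,S_m)$ are well-defined by Lemma~\ref{lem:distinct-nontrivial-rational}, and by Lemma~\ref{lem:equivalence-rational} the claim is equivalent to the purely lattice-theoretic assertion that, for $L:=L(G,S_m)\subset\ZZ^m$ (with coordinates indexed by the elements of $S_m$) and $D:=\max(3,\lceil n/k\rceil)$, one has $\min\{d:L=\langle L\cap d\Omega_m\rangle\}=\min\{d:\operatorname{rk}\langle L\cap d\Omega_m\rangle=m\}=D$. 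As $\gfield^\mathbf{r}\le\bfield^\mathbf{r}$, it suffices to prove $\gfield^\mathbf{r}(G,S_m)\ge D$ and $\bfield^\mathbf{r}(G,S_m)\le D$. Throughout, $L=\{\bfa:\sum_{s\in S_m}sa_s\equiv 0\pmod n\}$; since $1\in S_m$, the homomorphism $\bfa\mapsto\sum_s sa_s$ maps $\ZZ^m$ onto $\ZZ$ with kernel $L_0$ of rank $m-1$, and onto $\ZZ/n$ with kernel $L$ of index $n$, and a short computation gives $L=L_0'+\ZZ\cdot n\bfe_1$ where $L_0':=\langle\,\bfe_s-s\bfe_1:s\in S_m\setminus\{1\}\,\rangle\subset L_0$.

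For the lower bound, Proposition~\ref{prop:gfield-at-least-3-rational} gives $\gfield^\mathbf{r}(G,S_m)\ge 3$ because $1\in S_m$ is not an involution (as $n\ge 3$), so not all characters of $V$ are involutions. Separately, any $\bfa\in L$ with $\deg\bfa=\sum_s|a_s|\le\lceil n/k\rceil-1$ satisfies $|\sum_s sa_s|\le k\deg\bfa\le k(\lceil n/k\rceil-1)<n$, so the congruence forces $\sum_s sa_s=0$, i.e. $\bfa\in L_0$; hence $L\cap(\lceil n/k\rceil-1)\Omega_m\subset L_0$ spans a lattice of rank $\le m-1$, so $\gfield^\mathbf{r}(G,S_m)\ge\lceil n/k\rceil$ by Lemma~\ref{lem:equivalence-rational}. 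Together these give $\gfield^\mathbf{r}(G,S_m)\ge D$.

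For the upper bound I plan to write down a generating set of $L$ of degree $\le D$. The "chain" vectors $\bfe_j-\bfe_{j-1}-\bfe_1$ (for positive $j\in S_m$, $j\ge 2$), $\bfe_{-j}-\bfe_{-(j-1)}-\bfe_{-1}$ (for negative $-j\in S_m$, $j\ge 2$), and $\bfe_1+\bfe_{-1}$ (when $-1\in S_m$) all lie in $L$ and have degree $\le 3\le D$; telescoping within the positive block and within the negative block, and using $\bfe_1+\bfe_{-1}$ to relate $\bfe_{-1}$ to $-\bfe_1$, shows that the lattice $L_1$ they generate contains $\bfe_s-s\bfe_1$ for every $s\in S_m\setminus\{1\}$, i.e. $L_1\supseteq L_0'$. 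It then remains only to reach $n\bfe_1$: writing $n=qk+r$ with $0\le r<k$, set $\bfa:=q\bfe_k+\bfe_r$ (or $\bfa:=q\bfe_k$ if $r=0$); then $\bfa\in L$, $\sum_s sa_s=n$ exactly, and $\deg\bfa=q+[\,r>0\,]\le\lceil n/k\rceil\le D$. Since $\bfa-n\bfe_1=\sum_{s\ne 1}a_s(\bfe_s-s\bfe_1)\in L_0'\subseteq L_1$, we get $n\bfe_1\in\langle L_1\cup\{\bfa\}\rangle$, hence $\langle L_1\cup\{\bfa\}\rangle\supseteq L_0'+\ZZ\cdot n\bfe_1=L$, so $L$ is generated in degree $\le D$ and $\bfield^\mathbf{r}(G,S_m)\le D$. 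Combining both bounds with $\gfield^\mathbf{r}\le\bfield^\mathbf{r}$ yields the three equalities.

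The part requiring the most care — and the one where this argument parallels and slightly streamlines the proof of \cite[Proposition~5.2]{blum2023degree} — is the upper-bound bookkeeping: verifying that the degree-$\le 3$ chain relations really do generate $L_0'$ (the telescoping must be carried out separately on the positive and negative blocks of $S_m$, and in the odd-$m$ case the top index $k$ has no negative partner, so one must check the surviving relations still reach every $\bfe_s-s\bfe_1$, including the degenerate cases $m=1,2$), together with the verification that the single extra generator $\bfa$ has degree at most $D$ in every regime, especially the small cases where $\lceil n/k\rceil<3$ and hence $D=3$. The extra freedom of negative exponents is exactly what lets the same division-with-remainder numerics go through with the symmetric set $S_m$.
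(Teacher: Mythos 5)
Your proposal is correct and follows essentially the same route as the paper, which simply cites the proof of Proposition~5.2 of the companion paper with the single modification of inserting absolute values into the key inequality $|\varphi(\bfa)| \leq \sum_{A\in S_m}|A|\,|a_A| \leq \lceil m/2\rceil \deg\bfa$ --- exactly your lower-bound estimate $|\sum_s s a_s|\leq k\deg\bfa$, combined with the hard floor of Proposition~\ref{prop:gfield-at-least-3-rational} and an explicit low-degree generating set for the upper bound. Your write-up is in effect a self-contained reconstruction of that argument, and the bookkeeping (chain vectors of degree $\leq 3$ generating $L_0'$, plus the division-with-remainder vector $q\bfe_k+\bfe_r$ of degree $\lceil n/k\rceil$) checks out, including the edge cases $m=1,2$.
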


\begin{proof}
    The proof of \cite[Proposition~5.2]{blum2023degree} works word-for-word after replacing $a_A$ with $|a_A|$ in the inequality
\[
|\varphi(a)| \leq \sum_{A\in S_m} |A|a_A \leq \lceil m/2\rceil \deg \bfa.\qedhere
\]
\end{proof}

Therefore, $\bfield^\mathbf{r}$, and even $\gfield^\mathbf{r}$, attains the upper bound conjectured for $\bfield$ in \cite[Conjecture~5.1]{blum2023degree}.

\begin{remark}
    The proofs of \cite[Theorem~3.1]{blum2023degree} and Theorem~\ref{thm:lower-bound-rational} above are of the style of argumentation of Minkowski's classical {\em geometry of numbers} \cite{lekkerkerker-gruber}, which studies interactions between convex bodies and lattices by reasoning about lengths and volumes. One may ask what classical geometry of numbers can tell us about upper bounds. This is a particularly tempting question regarding $\gfield^\mathbf{r}$ and $\bfield^\mathbf{r}$, since $\Omega_m$ (unlike $\Delta_m$) is centrally symmetric in addition to being convex and bounded, so the original theorems of Minkowski can be applied directly. Furthermore,   Lemmas~\ref{lem:equivalence-rational} and \ref{lem:distinct-nontrivial-rational} imply that $\gfield^\mathbf{r}(G,V)$ is precisely the highest successive minimum (an object of classical study, see \cite[p.~123]{lekkerkerker-gruber}) of $\Omega_m$ with respect to $L(G,\Supp V)$. 
    
    One does obtain an upper bound on $\gfield^\mathbf{r}$ by na\"ively applying Minkowski's Second Theorem \cite[Chapter~2, \S~16, Theorem 3]{lekkerkerker-gruber}, also known as Minkowski's theorem on successive minima. However, it is much weaker than the upper bounds proven in \cite{blum2023degree}, or even the Noether bound \cite{noether, fleischmann2000noether, fogarty2001noether}, which states that the ring of invariants is generated by polynomials of degree $\leq |G|$ in coprime characteristic.
    
    To illustrate, take $G= \ZZ/p\ZZ$ with $p$ an odd prime, and $V$ a representation of $G$ over $\CC$ containing $m$ distinct, nontrivial characters. Minkowski's Second Theorem, applied to the lattice $L:=L(G,\Supp V)$ and the bounded, centrally symmetric convex body $\Omega_m$, states that 
    \[
    \lambda_1(\Omega_m,L)\cdots\lambda_m(\Omega_m,L) V(\Omega_m) \leq 2^m\det(L),
    \]
    where $\lambda_1(\Omega_m,L),\dots,\lambda_m(\Omega_m,L)$ are the successive minima of $\Omega_m$ with respect to $L$, the last of which is $\gfield^\mathbf{r}(G,V)$; $\det(L)$ is the determinant of the lattice $L$, which is $p$; and $V(\Omega_m)$ is the volume of $\Omega_m$, which is $2^m/m!$. We thus obtain
    \[
    \lambda_1(\Omega_m,L)\cdots\lambda_{m-1}(\Omega_m,L)\gfield^\mathbf{r}(G,V) \leq m!p.
    \]
    The arguments in the proofs of \cite[Proposition~3.5]{blum2023degree} and Proposition~\ref{prop:gfield-at-least-3-rational} imply that $L$ contains no points of $L^1$-norm 1, and (because $p$ is odd, so $G$ contains no involutions) at most $\lfloor m/2 \rfloor$ linearly independent points of $L^1$-norm $2$. Because $\Omega_m$ is the unit ball in the $L^1$ norm, it follows that $\lambda_1(\Omega_m,L),\dots,\lambda_{\lfloor m/2\rfloor}(\Omega_m,L)$ are all $\geq 2$, while $\lambda_{\lfloor m/2\rfloor+1}(\Omega_m,L),\dots,\lambda_{m-1}(\Omega_m,L)$ are all $\geq 3$. (These bounds are attained by the lattices constructed in \cite[Proposition~5.2]{blum2023degree}, so they are best possible.) Incorporating these bounds, we finally obtain
    \begin{equation}\label{eq:successive-minima-bound}
\gfield^\mathbf{r}(G,V) \leq \frac{m!p}{2^{\lfloor m/2\rfloor}3^{\lceil m/2\rceil - 1}}.
    \end{equation}
    For $m=1,2$, the right side is $p$, which is sharp; but for $m\geq 3$, the bound
    \[
    \gfield^\mathbf{r}(G,V) \leq \bfield^\mathbf{r}(G,V) \leq \bfield(G,V) \leq \frac{p+3}{2}
    \]
    obtained from \cite[Theorem~3.11]{blum2023degree} is better, and for large $m$ it is dramatically better. Indeed, the right side of \eqref{eq:successive-minima-bound} increases rapidly with $m$, while \cite[Theorem~3.11]{blum2023degree} and the Noether bound do not depend on $m$, and the true upper bound suggested by computational data \cite[Conjecture~5.1]{blum2023degree} decreases with increasing $m$.

    This does not rule out the possibility of proving a more useful upper bound via a more thoughtful application of the classical theorems.
\end{remark}

\section*{Acknowledgements}

The author wishes to thank Larry Guth for the encouragement to pursue the question addressed here, and Thays Garcia, Rawin Hidalgo, and Consuelo Rodriguez for the collaboration without which this work would not have been possible to conceive.

\bibliographystyle{alpha}
\bibliography{bib}

\end{document}